\documentclass[12pt]{article}
\usepackage{preprint-layout} 
\usepackage{preprint-notation} 
\usepackage[]{preprint-tools} 


\title{CutFEM Based on Extended Finite Element Spaces} 
\date{\today}

\author{
Erik Burman,
Peter Hansbo,
Mats G. Larson 
}
\date{}

\begin{document}

\maketitle

\begin{abstract}
We develop a general framework for construction and analysis of discrete extension operators with application to unfitted finite 
element approximation of partial differential equations. In unfitted methods so called cut elements intersected by the boundary 
occur and these elements must in general by stabilized in some way. Discrete extension operators provides such a stabilization by
modification of the finite element space close to the boundary. More, precisely the finite element space is extended from the stable 
interior elements over the boundary in a stable way which also guarantees optimal approximation properties. Our framework is applicable 
to all standard nodal based finite elements of various order and
regularity. We develop an abstract theory for elliptic problems and
associated parabolic time dependent partial differential
equations and derive a priori error estimates. We finally apply this
to some examples
of partial differential equations of different order
including the interface problems, the biharmonic operator and the
sixth order triharmonic operator.
\end{abstract}
\section{Introduction}
Building on ideas around unfitted finite element methods using
Nitsche's method and boundary stabilization terms from the papers
\cite{HH02,HHL03,HH04,BBH09, Bu10,BH12,BH14,HLZ14,MLLR14}, the framework of cut finite element methods
was proposed in \cite{BCHLM15}. The main idea was to use computational meshes
and finite element spaces
that were independent of the geometry of the physical problem. Both
the partial differential equation and the geometry were then
defined through the finite element variational form. In particular,
Nitsche's method was used to impose boundary and interface conditions
and certain stabilization terms, so called, ghost penalty terms
ensured stability even in the presence of unfavourable mesh interface
intersections.
Typically such ghost penalty terms acted on jumps of functions over
element edges or penalized the difference between polynomial
projections or extensions in the interface zone. This provides a
convenient solution for standard $C^0$, low order elements, but 
in other situations it may be less attractive.
For instance, when the element order becomes high, elements with higher regularity are used, 
or systems of equations with several different
differential operators are considered the design and evaluation of the
ghost penalty terms becomes non-trivial and costly. For such
situations we propose a different approach in this work. While still
assuming that the mesh is independent of the geometry we let
the approximation space be geometry dependent. To build in stability 
we use discrete extensions from the interior of the domain to interface
elements, that are at risk of leading to unstable linear systems in
the pde-discretization. We then prove that similar stability as for the ghost 
penalty can be obtained with optimal approximation properties and without the
introduction of any numerical parameters. Similar ideas have been exploited in 
agglomeration approaches for $C^0$ approximation spaces 
\cite{HWX17,BMV18a,BMV18b} and in discontinuous Galerkin methods \cite{JL13,BE18}. 
In this work we propose a complete framework for discrete extensions 
for finite element spaces that allow the treatment of nodal based elements 
of all orders and all regularities, with a rigorous analysis of stability and
approximation properties. 

The proposed CutFEM with discrete extension is particularly appealing for high
order elliptic problems, where suddenly elements that are accurate and
easy to construct, such as the Bogner-Fox-Schmit (BFS) element that is
$C^1$ or similar spaces with higher regularity, become
interesting. Typically the main drawback of such spaces that are based
on tensorization is that they can not fit the physical boundary. This
problem is solved in the CutFEM framework and the use of discrete
extension reduces the need of stabilization terms. We refer to
\cite{BHL20a} for details on previous work combining CutFEM and the
BFS-element. See \cite{EmbDol10} for related developments for second and 
fourth order problems and \cite{HS12} for the thin plate equation. The discrete 
extension is more intrusive than the penalty
approach, since the approximation space is modified, however once 
implemented it gives a flexible and robust tool
for CutFEM discretization methods. For a discussion of the use of
discrete extensions to allows for fully explicit timestepping for the
wave equation discretized using CutFEM we refer to \cite{BHL20b}.

\paragraph{{\bf{Outline:}}}
In Section \ref{sec:ext_FEM} we introduce the framework for
extended finite element spaces and prove some fundamental stability
and approximation results.
In Section \ref{sec:abstract} we discuss an abstract framework for cut
finite element methods using discrete extension spaces and Nitsche's
method. Optimal a priori error estimates
are derived using the properties of the extended space. The extension to parabolic problems 
is also briefly covered.  In Section \ref{sec:appli} we show that some important 
problem classes, such as fictitious domain problem or interface
problems subject to elliptic operators of second and fourth order fit naturally 
in the framework. Finally, in Section \ref{sec:num} some numerical
illustrations are presented.
\section{Extended Finite Element Spaces}\label{sec:ext_FEM}

In this section we develop an abstract theory for construction of extension operators for application to various types of unfitted 
discretizations of partial differential equations. We consider a conforming setting where the finite element space $V_h$ is a 
subspace of $H^l$, typically used to discretize an elliptic operator of order $2l$. The extension is constructed as the composition 
of an average operator that maps a discontinuous space $W_h$ onto $V_h$ and an extension in $W_h$ from interior elements to elements 
intersecting the boundary. Since $W_h$ is discontinuous we can easily extend from one element to another by canonical extension 
of polynomials.

\subsection{The Discrete Extension Operator}
\paragraph{The Mesh and Finite Element Space.}
\begin{itemize}
\item Let $\Omega$ be a domain in $\IR^d$ with boundary $\partial \Omega$ and let $\widetilde{\Omega}$ be a polygonal domain such that 
$\Omega \subset \widetilde{\Omega}$. 
\item Let $\widetilde{\mcT}_h$ be a quasiunform mesh on $\widetilde{\Omega}$ with mesh parameter $h \in (0,h_0]$ and define 
the active mesh $\mcT_h = \{ T \in \widetilde{\mcT}_h : T \cap \Omega \neq \emptyset \}$. Let $\Omega_h = \cup_{T \in \mcT_h} T$ be the domain 
covered by $\mcT_h$.
\item Let $\widetilde{V}_h$ be a finite element space on $\widetilde{\mcT}_h$ and let $V_h = \widetilde{V}_h |_{\mcT_h}$ be the 
active finite element space.
\end{itemize}

\paragraph{Definition of the Extension Operator.}
\begin{itemize}
\item Define the following partition of $\mcTh$,
\begin{equation}
\mcTh = \mcT_{h,B} \cup \mcT_{h,I}
\end{equation}
where $\mcT_{h,I}$ is the set of elements in the interior of $\Omega$  and $\mcT_{h,B}$ are the elements that intersect the boundary, 
\begin{equation}
\mcT_{h,I} = \{ T \in \mcT_h : T \subset \Omega\},\qquad \mcT_{h,B } = \mcTh \setminus \mcT_{h,I}
\end{equation}
Let $\Omega_{h,I} = \cup_{T \in \mcT_{h,I}} T$ and note that 
\begin{equation}
\Omega_{h,I} \subset \Omega \subset \Omega_h
\end{equation}
\item Let 
\begin{equation}
W_h = \mathbb{P}_k(\mcT_h) = \bigoplus_{T \in \mcT_h} \mathbb{P}_k(T)
\end{equation}
and let $V_h$ be a subspace of $W_h \cap H^l(\Omega_h)$.

\item Define the spaces 
\begin{equation}
W_{h,I} = W_h|_{\mcT_{h,I}}, \qquad V_{h,I} = V_h|_{\mcT_{h,I}}
\end{equation}
and let 
\begin{equation}
(\cdot )_I : W_h \ni w \mapsto (w)_I = w|_{\mcT_{h,I}} \in W_{h,I}
\end{equation}
denote the restriction of $w \in W_h$ to $\mcT_{h,I}$.

\item Define the extension operator 
\begin{equation}
\boxed{ E_h : V_{h,I} \ni v \mapsto A_h F_h v \in V_{h,I}^E \subset V_{h} }
\end{equation}
where $V_{h,I}^E = E_h V_{h,I}$ is the image of $V_{h,I}$ under the action of $E_h$, 
$A_h : W_h \rightarrow V_h$ is a linear average operator, and $F_h : W_{h,I} \rightarrow W_h$ 
is a linear extension operator and we define its image as the 
  subspace $W_h^E \subset W_h$,
\begin{equation}
W_h^E = \{w_h
\in W^h: w_h = F_h w_I, \, w_i \in W_{h,i} \}
\end{equation}
 We specify the properties of these operators below.
\end{itemize}

\paragraph{Norms.} For $l \in \IR$ we let $H^l(\omega)$ denote the standard Sobolev space of 
order $l$ with norm $\| \cdot \|_{H^l(\omega)}$ and semi norm $| \cdot |_{H^l(\omega)}$. For $l=0$ we use 
the notation $H^0(\omega) = L^2(\omega)$ and $\| \cdot \|_{L^2(\omega)} = \| \cdot \|_\omega$. For 
$l \in \mathbb{N}$ we define the following broken Sobolev norm on $W_h$,
\begin{align}
\| v \|^2_{H^l (\mcT_h)} 
&=
\sum_{j=0}^l \| \nabla^j v \|^2_{\mcT_h} 
\end{align}
where $\nabla^j v = \otimes_{k=0}^j \nabla v$ is the tensor of all $j$:th order partial derivatives of $v$, and 
$\|v \|^2_{\mcT_h} = \sum_{T \in \mcT_h} \| v \|^2_T$. For $l=0$ we {let 
 $\| v \|^2_{H^0(\mcT_h)} = \| v \|^2_{\mcT_h}$}.  We note that we can replace $\mcT_h$ by $\mcT_{h,I}$ and 
 $\mcF_{h}$ by $\mcF_{h,I}$ and get the corresponding discrete Sobolev norms on $W_{h,I}$.

\paragraph{Assumptions.}
\begin{description}
\item[A1] The space $W_h$ (and its subspaces $V_h$ and $W_h^E$) satisfies the inverse inequality 
\begin{equation}\label{eq:inverse-Wh}
\| w \|_{H^m(\mcT_h)} \lesssim h^{-m} \| w \|_{\mcT_h} 
\end{equation}
Here and below $\lesssim$ means less or equal up to a constant that is independent of the mesh 
parameter and the intersection of the domain and the mesh.

\item[A2] The spaces $V_h$ and $W_h^E$ satisfies the following approximation properties. For 
$v \in H^{s}(\Omega_h)$, with $0\leq s \leq k+1$ there is $v_\star \in V_h$ such that
\begin{align}\label{eq:approx-Vh}
\| v - v_\star \|_{H^m(\Omega_h)} \lesssim h^{s-m} | v |_{ H^{s}(\Omega_h) }
\end{align} 
and $w_\star \in W_h^E$ such that
\begin{align}\label{eq:approx-WhE}
\| v - w_\star \|_{H^m(\Omega_h)} \lesssim h^{s-m} | v |_{ H^{s}(\Omega_h) }
\end{align} 

\item[A3] The operator $A_h : W_h \rightarrow V_h$ is linear, bounded
\begin{equation}\label{eq:Ah-bounded}
\| A_h v \|_{\mcT_h} \lesssim \| v \|_{\mcT_h}
\end{equation}
 and 
\begin{equation} \label{eq:Ah-identity}
 A_h v = v \qquad v \in V_h
 \end{equation}
\item[A4] The operator $F_h : W_{h,I} \rightarrow W_h$ is linear, bounded
\begin{equation}\label{eq:Fh-bounded}
\| \nabla^j F_h w \|_{\mcT_h} \lesssim \| \nabla^j w \|_{\mcT_{h,I}}
,\qquad 0 \le j \le l
\end{equation}
and
\begin{equation}  \label{eq:Fh-identity}
 F_h (w)_I = w, \qquad w \in W_h^E
 \end{equation}
\end{description}

\subsection{Properties of the Extension Operator}
In this section we will show that any extension operator constructed
using an extension operator $F_h$ and and averaging operator $A_h$
satisfying the assumptions A1-A4, has
properties making it suitable for approximation using CutFEM.
We first show a stability estimate which typically is needed to establish coercivity of Nitsche's method and then 
we show that the extended finite element space $V_h^E$ has an optimal order approximation property.

\begin{lem}{\bf (Stability).}\label{lem:stab_E}
The extension operator $E_h: V_{h,I} \rightarrow V_h^E$, where $V_h^E \subset V_h \subset H^l(\Omega_h)$, 
satisfies the stability estimate
\begin{align}
\boxed{\| \nabla^j E_h v \|_{\Omega_h} \lesssim  \| \nabla^j v \|_{\Omega_{h,I}}, \qquad 0\leq j \leq l}
\end{align}
\end{lem}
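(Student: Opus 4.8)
The plan is to exploit the factorization $E_h=A_h\circ F_h$ and the two reproduction identities \eqref{eq:Ah-identity} and \eqref{eq:Fh-identity}. Since $E_hv\in V_h\subset H^l(\Omega_h)$, the broken norms $\|\nabla^j\cdot\|_{\mcT_h}$, $\|\nabla^j\cdot\|_{\mcT_{h,I}}$ coincide with $\|\nabla^j\cdot\|_{\Omega_h}$, $\|\nabla^j\cdot\|_{\Omega_{h,I}}$ on the functions at hand, so I would first reduce the claim to $\|\nabla^jA_hF_hv\|_{\mcT_h}\lesssim\|\nabla^jv\|_{\mcT_{h,I}}$ for $0\le j\le l$. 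Setting $w=F_hv\in W_h^E$, assumption A4, i.e.\ \eqref{eq:Fh-bounded}, immediately yields $\|\nabla^jw\|_{\mcT_h}\lesssim\|\nabla^jv\|_{\mcT_{h,I}}$ for $0\le j\le l$, so the whole difficulty is to let $A_h$ act without paying the factor $h^{-j}$ that a direct use of the inverse inequality \eqref{eq:inverse-Wh} together with the $L^2$-stability \eqref{eq:Ah-bounded} would cost.

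To remove that loss I would subtract a finite element function that $A_h$ reproduces. Suppose for the moment we have $p\in V_h$ with $\|w-p\|_{\mcT_h}\lesssim h^l\|\nabla^lv\|_{\mcT_{h,I}}$. Writing $A_hw=A_h(w-p)+p$ by \eqref{eq:Ah-identity}, the inverse inequality applied to $A_h(w-p)\in V_h$ and then \eqref{eq:Ah-bounded} give $\|\nabla^jA_h(w-p)\|_{\mcT_h}\lesssim h^{-j}\|w-p\|_{\mcT_h}$, while the inverse inequality on $w-p\in W_h$ and \eqref{eq:Fh-bounded} give $\|\nabla^jp\|_{\mcT_h}\lesssim h^{-j}\|w-p\|_{\mcT_h}+\|\nabla^jw\|_{\mcT_h}$. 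Since $h^{-j}\|w-p\|_{\mcT_h}\lesssim h^{l-j}\|\nabla^lv\|_{\mcT_{h,I}}\lesssim\|\nabla^jv\|_{\mcT_{h,I}}$ after one further use of the inverse inequality on $\mcT_{h,I}$, the lemma follows.

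Constructing $p$ is the crux, and the main obstacle: $w=F_hv$ is discontinuous across the cut-element faces, so \eqref{eq:approx-Vh} cannot be invoked for $w$ directly, and a crude smooth extension of the interior data $(w)_I$ already disagrees with $w$ by an $O(1)$ amount on the one-element-wide band of cut elements, which is far worse than the $O(h^l)$ we need. I would get around this by using that $W_h^E$ has the \emph{same} approximation power as $V_h$, i.e.\ both \eqref{eq:approx-Vh} and \eqref{eq:approx-WhE}. First extend $(w)_I$ from $\Omega_{h,I}$ to $\phi\in H^l(\Omega_h)$ in a seminorm-controlled way (subtract an averaged Taylor polynomial of degree $l-1$, apply a Sobolev extension, add the polynomial back), so that $\phi|_{\Omega_{h,I}}=(w)_I$ and $|\phi|_{H^l(\Omega_h)}\lesssim\|\nabla^l(w)_I\|_{\mcT_{h,I}}\lesssim\|\nabla^lv\|_{\mcT_{h,I}}$. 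Then take $p=\phi_\star\in V_h$ from \eqref{eq:approx-Vh} and $\psi_\star\in W_h^E$ from \eqref{eq:approx-WhE}, both approximating $\phi$ to order $h^l|\phi|_{H^l(\Omega_h)}$, and estimate $\|w-p\|_{\mcT_h}\le\|w-\psi_\star\|_{\mcT_h}+\|\psi_\star-\phi_\star\|_{\mcT_h}$. The second term is $O(h^l|\phi|_{H^l})$ since both functions approximate the same $\phi$; for the first, \eqref{eq:Fh-identity} gives $w-\psi_\star=F_h\bigl((w)_I-(\psi_\star)_I\bigr)$, whence \eqref{eq:Fh-bounded} and $\phi|_{\Omega_{h,I}}=(w)_I$ yield $\|w-\psi_\star\|_{\mcT_h}\lesssim\|(w)_I-(\psi_\star)_I\|_{\mcT_{h,I}}=\|\phi-\psi_\star\|_{\Omega_{h,I}}\lesssim h^l|\phi|_{H^l}$. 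The delicate points I expect to fight with are the uniform-in-$h$ constant in the Sobolev extension (one must check that $\Omega_{h,I}$ has uniformly bounded Lipschitz character as $h\to0$) and the bookkeeping that keeps every $h$-power attached to the seminorm $\|\nabla^jv\|_{\mcT_{h,I}}$ rather than the full norm.
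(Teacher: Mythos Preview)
Your overall strategy---write $A_hw=A_h(w-p)+p$ with $p\in V_h$, kill the first piece with the inverse inequality and the $L^2$-boundedness of $A_h$, and manufacture the needed smallness of $\|w-p\|_{\mcT_h}$ by routing through a $W_h^E$-approximant via \eqref{eq:Fh-identity} and \eqref{eq:Fh-bounded}---is exactly the paper's. The one place you diverge is in the choice of the $H^l(\Omega_h)$ function to which the approximation assumptions A2 are applied.

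You build an auxiliary Sobolev extension $\phi$ of the interior data and then take $p=\phi_\star$, $\psi_\star\in W_h^E$ approximating $\phi$. The paper instead simply takes $p=v$ itself: since $V_{h,I}=V_h|_{\mcT_{h,I}}$, one may regard $v$ as (the restriction of) an element of $V_h\subset H^l(\Omega_h)$, and then \eqref{eq:approx-WhE} applies directly to $v$, giving $w_\star\in W_h^E$ with $\|v-w_\star\|_{\mcT_h}\lesssim h^j|v|_{H^j(\Omega_h)}$. The estimate $\|F_h(v)_I-v\|_{\mcT_h}\le\|F_h(v-w_\star)_I\|_{\mcT_h}+\|w_\star-v\|_{\mcT_h}\lesssim\|v-w_\star\|_{\mcT_h}$ then closes the argument in two lines, with no extension operator and hence none of the uniform-Lipschitz bookkeeping for $\Omega_{h,I}$ that you (rightly) flag as delicate.

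The trade-off is that the paper's shortcut lands on $\|\nabla^j v\|_{\Omega_h}$ (for the chosen $V_h$-representative of $v$) rather than $\|\nabla^j v\|_{\Omega_{h,I}}$ on the right-hand side; indeed the paper explicitly invokes the trivial bound $\|\nabla^j v\|_{\mcT_{h,I}}\le\|\nabla^j v\|_{\mcT_h}$ at the last step. Your route, if the uniform extension can be justified, would yield the sharper boxed inequality as stated. For the downstream applications in the paper (verifying B3, proving Lemma~\ref{lem:approx}) the weaker version already suffices, which is presumably why the authors did not pursue the extension argument.
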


\begin{proof} Adding and subtracting the identity operator, using the triangle inequality, and the 
inverse estimate (\ref{eq:inverse-Wh}) in A1, we obtain for an arbitrary $w \in W_h$,
\begin{align} \label{eq:A_stab_aux1}
\|\nabla^j A_h w\|_{\mcTh} &\leq \|\nabla^j (A_h - I ) w\|_{\mcTh} + \|\nabla^j  w\|_{\mcTh}
\\
&\lesssim h^{-j} \|(A_h - I) w\|_{\mcTh} + \|\nabla^j  w\|_{\mcTh}
\\
&\lesssim h^{-j} \|(A_h - I) (w - v') \|_{\mcTh} + \|\nabla^j  w\|_{\mcTh}
\\
&\lesssim h^{-j} \| w - v' \|_{\mcTh} + \|\nabla^j  w\|_{\mcTh} \label{eq:A_stab_aux2}
\end{align}
where we used (\ref{eq:Ah-identity}) in A3,  $(A_h - I) v =0$ for all $v \in V_h$, to insert an arbitrary $v' \in V_h$, and 
finally the $L^2$ boundedness (\ref{eq:Ah-bounded}) in A3 of $A_h$. Setting $w = F_h ( v )_I$ and $v' = v$ we get 
\begin{align}
\|\nabla^j A_h F_h (v )_I\|_{\mcTh} &\lesssim h^{-j} \| F_h (v )_I - v \|_{\mcTh} + \|\nabla^j F_h (v)_I \|_{\mcTh}
\\
&\lesssim h^{-j} \| F_h (v )_I - w_\star\|_{\mcTh} + h^{-j} \| w_\star - v \|_{\mcTh} + \|\nabla^j  F_h (v)_I\|_{\mcTh}
\\
&\lesssim h^{-j} \| F_h (v - w_\star)_I \|_{\mcTh} + h^{-j} \| w_\star - v \|_{\mcTh} + \|\nabla^j v \|_{\mcT_{h,I}}
\\
&\lesssim h^{-j} \| w_\star - v \|_{\mcTh} + \|\nabla^j v \|_{\mcTh}
\\
&\lesssim \| \nabla^j v \|_{\mcTh} + \| \nabla^j v \|_{\mcTh}
\end{align}
Here we added and subtracted $w_\star \in W_h^E$ and used the properties
of $F_h$ from A4. First the identity property (\ref{eq:Fh-identity}) and
then the 
 boundedness (\ref{eq:Fh-bounded}) . Finally the approximation property (\ref{eq:approx-WhE}) in A2 
is applied to $w_\star - v$ where we recall that $v \in H^j(\Omega_h)$
and we use the trivial bound $\|\nabla^j v \|_{\mcT_{h,I}}
\le \| \nabla^j v \|_{\mcTh}$.
\end{proof}

%

\begin{lem}{\bf (Approximation Property).}\label{lem:approx} For each $v \in H^{s}(\Omega_h)$, $0\leq s \leq k+1$, 
there is $v_\star^E \in V_h^E$ such that 
\begin{equation}\label{eq:approx-prop-VhE}
\boxed{\|v - v_\star^E \|_{H^m(\Omega_h)}  \lesssim h^{s-m} |v|_{H^{k+1}(\Omega)}, \qquad 0\leq m \leq l}
\end{equation}
where the hidden constant is independent of $v\in H^{s}(\Omega_h)$, the
intersection of $\partial \Omega$ with the elements, and the mesh size $h$.
\end{lem}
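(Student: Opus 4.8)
The plan is to produce $v_\star^E$ by applying the extension operator to the interior restriction of the standard $V_h$-interpolant. Concretely, let $v_\star \in V_h$ be the approximant of $v$ supplied by (\ref{eq:approx-Vh}) in A2, note that $(v_\star)_I \in V_{h,I}$, and set $v_\star^E := E_h(v_\star)_I = A_h F_h(v_\star)_I \in V_h^E$. Then I would split
\[
v - v_\star^E = (v - v_\star) + \bigl(v_\star - A_h F_h(v_\star)_I\bigr),
\]
bound the first term directly by (\ref{eq:approx-Vh}), and observe that since $v_\star \in V_h$ the reproduction property (\ref{eq:Ah-identity}) gives $v_\star = A_h v_\star$, so the second term equals $A_h \psi$ with $\psi := v_\star - F_h(v_\star)_I \in W_h$. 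The task thus reduces to showing $\|A_h\psi\|_{H^m(\Omega_h)} \lesssim h^{s-m}|v|_{H^s(\Omega_h)}$ for $0 \le m \le l$.

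The heart of the argument, and the step I expect to be the main obstacle, is that the defect $\psi$ of the restriction--extension composition is small; this is the one place where the approximability of $W_h^E$ and the identity property of $F_h$ on $W_h^E$ are both needed. Take $w_\star \in W_h^E$ to be the approximant of $v$ from (\ref{eq:approx-WhE}); since $w_\star \in W_h^E$, the identity (\ref{eq:Fh-identity}) gives $F_h(w_\star)_I = w_\star$, so by linearity of $F_h$ and of the restriction,
\[
\psi = (v_\star - w_\star) - F_h (v_\star - w_\star)_I .
\]
Using the boundedness (\ref{eq:Fh-bounded}) of $F_h$ together with $\|\cdot\|_{\mcT_{h,I}} \le \|\cdot\|_{\mcT_h}$ I get $\|\nabla^i\psi\|_{\mcT_h} \lesssim \|\nabla^i(v_\star - w_\star)\|_{\mcT_h}$ for $0 \le i \le l$, and the latter is $\le \|\nabla^i(v - v_\star)\|_{\mcT_h} + \|\nabla^i(v - w_\star)\|_{\mcT_h} \lesssim h^{s-i}|v|_{H^s(\Omega_h)}$ by the two estimates in A2; in particular $\|\psi\|_{\mcT_h} \lesssim h^s|v|_{H^s(\Omega_h)}$.

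It then remains to pass from $\psi$ to $A_h\psi$, which is exactly the manipulation already carried out in the proof of Lemma \ref{lem:stab_E}: adding and subtracting the identity, invoking the inverse inequality (\ref{eq:inverse-Wh}), using that $(A_h - I)$ vanishes on $V_h$ by (\ref{eq:Ah-identity}), and the $L^2$-boundedness (\ref{eq:Ah-bounded}), one has for any $w \in W_h$ and any $v' \in V_h$ that $\|\nabla^j A_h w\|_{\mcT_h} \lesssim h^{-j}\|w - v'\|_{\mcT_h} + \|\nabla^j w\|_{\mcT_h}$. Applying this with $w = \psi$ and $v' = 0$ and inserting the bounds above gives $\|\nabla^j A_h\psi\|_{\mcT_h} \lesssim h^{-j}\|\psi\|_{\mcT_h} + \|\nabla^j\psi\|_{\mcT_h} \lesssim h^{s-j}|v|_{H^s(\Omega_h)}$ for $0 \le j \le m$. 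Squaring and summing over $0 \le j \le m$ (the $j=m$ term dominating) yields $\|A_h\psi\|_{H^m(\Omega_h)} \lesssim h^{s-m}|v|_{H^s(\Omega_h)}$, and together with the bound on $v - v_\star$ this gives (\ref{eq:approx-prop-VhE}). Beyond the middle step, everything here is routine bookkeeping already in place from Lemma \ref{lem:stab_E}.
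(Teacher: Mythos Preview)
Your argument is correct. It differs from the paper's in the choice of approximant: the paper sets $v_\star^E = A_h w_\star$ (with $w_\star$ the $W_h^E$-approximant from (\ref{eq:approx-WhE})) and decomposes $v - A_h w_\star = (v - w_\star) + (I - A_h)w_\star$, handling the second term via $(I-A_h)w_\star = (I-A_h)(w_\star - v_\star)$ together with the inverse inequality and the $L^2$-boundedness of $A_h$. You instead take $v_\star^E = E_h(v_\star)_I$ and pass through the defect $\psi = v_\star - F_h(v_\star)_I$, inserting $w_\star$ only to control $\psi$. Both routes use the same ingredients (both approximants from A2, the identity and boundedness properties of $A_h$ and $F_h$, and the inverse inequality), but your decomposition has one concrete advantage: since $(v_\star)_I \in V_{h,I}$, the element $E_h(v_\star)_I$ lies in $V_h^E = E_h(V_{h,I})$ by definition, whereas the paper's $A_h w_\star = A_h F_h(w_\star)_I$ has $(w_\star)_I \in W_{h,I}$ rather than $V_{h,I}$, so membership in $V_h^E$ is not immediate from the stated definitions. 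The price you pay is a slightly longer chain, since $F_h$ must also be dealt with in the second term; but, as you observe, that extra step is precisely the manipulation already isolated in the proof of Lemma~\ref{lem:stab_E}.
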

\begin{proof} We shall show that $v_\star^E = A_h w_\star$, with $w_\star$ as in (\ref{eq:approx-WhE}), satisfies 
(\ref{eq:approx-prop-VhE}). To that end, adding and subtracting 
$w_\star \in W_h \in W_h^E \subset H^l(\Omega_h)$ and using the
triangle inequality show that
\begin{equation}
\| v - A_h (v_\star)_I \|_{H^m(\Omega_h)}
\leq
\underbrace{\| v - w_\star \|_{H^m(\mcTh)}}_{I} + \underbrace{\| w_\star - A_h w_\star\|_{H^m(\mcTh)}}_{II} 
\end{equation}
\paragraph{Term $\bfI$.} Using the approximation property  (\ref{eq:approx-Vh}) for $V_h$ we directly have
\begin{equation}
\| v - w_\star \|_{H^m(\Omega_h)}  
\lesssim  h^{s-m} | v |_{H^s(\Omega_h)}
\end{equation}
\paragraph{Term $\bfI\bfI$.} Using the inverse inequality (\ref{eq:inverse-Wh}) to pass to the $L^2$-norm, adding and 
subtracting $v_\star \in V_h$, which satisfies (\ref{eq:approx-Vh}), and using the triangle inequality we obtain
\begin{align}
& \| w_\star  - A_h (w_\star) \|_{H^m(\mcTh)}   
\\ \label{eq:approx-a}
&\qquad \lesssim  h^{-m} \| w_\star  - A_h (v_\star)  \|_{\mcTh}   
\\
&\qquad \lesssim  h^{-m} \| v_\star  - w_\star - A_h (v_\star - w_\star )_I  \|_{\mcTh}   
\\
&\qquad \lesssim  h^{-m} \| v_\star  - w_\star\|_{\mcT_h}  + h^{-m} \| A_h (v_\star - w_\star )_I  \|_{\mcTh}   
 \\
&\qquad \lesssim  h^{-m} \| v_\star  - w_\star\|_{\mcT_h}  
 \\
 &\qquad \lesssim  h^{-m}\| v_\star  - v \|_{\mcT_h}  + h^{-m} \| v - w_\star\|_{\mcT_h}  
 \\ \label{eq:approx-b}
 &\qquad \lesssim 
 h^{s-m} |  v  |_{H^{s}(\Omega_h)} 
\end{align}
where we used the $L^2$ stability (\ref{eq:Ah-bounded}) of $A_h$ followed by the obvious fact that $\| (w)_I  \|_{\mcT_{h,I}} 
\leq \| w \|_{\mcT_{h,I}}$, then we added and subtracted $v$, and finally used the approximation properties 
(\ref{eq:approx-Vh}) and (\ref{eq:approx-WhE}) for $V_h$ and $W_h^E$. 
\end{proof}

\subsection{Construction of $\boldmath{A}_{\boldmath{h}}$}

In order to define a general average operator for nodal finite elements we recall the following 
definitions.
\begin{itemize}
\item For each element $T$ let $V_h(T)$ be the element finite element space. Let $\{ \varphi^*_{T,x} \}_{x\in \mcX_T}$ be a 
basis for the dual space $V^*_h(T)$ and let the corresponding Lagrange basis $\{\varphi_{T,x} \}_{\mcX_T}$ in $V_h(T)$ be 
defined by $\varphi^*_x(\varphi_y) = \delta_{xy}$ for $x,y \in \mcX_T$.   

\item Assume that the degrees of freedom are nodal degrees of the form $\varphi^*_{T,x}(v) = D^{\alpha_x}v(\xi_x)$, where $D^{\alpha_x}$ 
is a partial differential operator with multi index $\alpha_x $ and $\xi_x\in \IR^d$ is the physical node. Observe that  there can be several 
functionals $\varphi^*_{T,x}$,  associated to the element/node pair $(T,\xi_x)$, corresponding to different derivatives $D^{\alpha_x} v (\xi_x)$. 
We refer to the pair $x=(\alpha_x, \xi_x)$ as a generalized node.
  
\item Let $\mcT_h(x)$ be the set of elements $T \in \mcT_h$ such that 
$\xi_x \in \overline{T}$ and define the global basis function $\varphi_x$ at node $x$ by $(\varphi_x)|_T = \varphi_{T,x}$ for all 
$T\in \mcT_h(x)$. Let $\mcX_h$ be the set of all global nodes. The
functions $\{ \varphi_x \, | \, x \in \mcX_h\}$ form a basis for $V_h$. 
For properly constructed spaces we have $V_h \subset H^l(\Omega_h)$.
\end{itemize}

\paragraph{Definition.}
Let the nodal averaging operator $A_h:W_h \mapsto V_h$ be defined by
\begin{equation}\label{eq:averege-operator}
\boxed{A_h: W_h \ni w \mapsto \sum_{x \in \mcX_h}  \langle \varphi_x^*(w) \rangle_{x}  \varphi_x  \in V_h}
\end{equation}
where the average of the discontinuous function $w\in W_h$ at a node $x\in \mcX_h$ 
is defined by 
\begin{equation}\label{eq:average-nodal}
\langle \varphi_x^*(w) \rangle_x 
= 
 \sum_{T \in \mcT_h(x)}  \kappa_{T,x} \varphi_{T,x}^*(w_T)
\end{equation}
with  weights $\kappa_{T,x}$ satisfying
\begin{equation}
\kappa_{T,x}\geq 0, \qquad \sum_{T \in \mcT_h(x)}  \kappa_{T,x} = 1
\end{equation}

\begin{lem} The average operator $A_h$ defined by
  (\ref{eq:averege-operator}) satisfies assumption A3. As a consequence
\begin{equation}\label{eq:A_Hstab}
\|\nabla^j A_h v\|_{\mcTh} \lesssim h^{-j} \inf_{v' \in V_h} \| v - v'
\|_{\mcTh} + \|\nabla^j  v\|_{\mcTh},\quad 0 \leq j \leq m
\end{equation}
\end{lem}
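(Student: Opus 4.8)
The plan is to check that $A_h$ meets the three conditions packed into A3 --- linearity, the reproduction identity (\ref{eq:Ah-identity}), and the $L^2$-bound (\ref{eq:Ah-bounded}) --- and then to read off (\ref{eq:A_Hstab}) as an immediate corollary of A1 and A3 using the chain of inequalities (\ref{eq:A_stab_aux1})--(\ref{eq:A_stab_aux2}) already established in the proof of Lemma~\ref{lem:stab_E}.

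Linearity is clear, since $w\mapsto\varphi_{T,x}^*(w_T)$, the convex combination $\langle\,\cdot\,\rangle_x$ in (\ref{eq:average-nodal}), and the assembly $\sum_{x\in\mcX_h}(\,\cdot\,)\varphi_x$ are each linear. For the identity (\ref{eq:Ah-identity}), let $v\in V_h$: conformity of $V_h$ means precisely that the nodal degrees of freedom agree across elements sharing a generalized node, i.e.\ $\varphi_{T,x}^*(v_T)$ equals a common value $\varphi_x^*(v)$ for every $T\in\mcT_h(x)$; since $\sum_{T\in\mcT_h(x)}\kappa_{T,x}=1$ this gives $\langle\varphi_x^*(v)\rangle_x=\varphi_x^*(v)$, and hence $A_h v=\sum_{x\in\mcX_h}\varphi_x^*(v)\varphi_x=v$ by the nodal expansion of $v$.

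The real work is the $L^2$-stability (\ref{eq:Ah-bounded}), which I would obtain by a standard element-by-element scaling argument. Fix $T\in\mcT_h$ and write $(A_h w)|_T=\sum_{x\in\mcX_T}c_x\,\varphi_{T,x}$ with $c_x=\langle\varphi_x^*(w)\rangle_x$. Mapping to a reference element $\widehat T$ via an affine map $F_T:\widehat T\to T$ whose Jacobian has size $h$ (quasi-uniformity), a generalized node $x=(\alpha_x,\xi_x)$ satisfies $D^{\alpha_x}v(\xi_x)=h^{-|\alpha_x|}\widehat D^{\alpha_x}\widehat v(\widehat\xi_x)$, and the dual Lagrange function therefore scales as $\|\varphi_{T,x}\|_T^2\sim h^{d+2|\alpha_x|}$, with constants depending only on the reference shape functions. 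Since the number of generalized nodes per element is uniformly bounded, $\|A_h w\|_T^2\lesssim\sum_{x\in\mcX_T}h^{d+2|\alpha_x|}|c_x|^2$. Using $\kappa_{T',x}\ge0$ and $\sum_{T'\in\mcT_h(x)}\kappa_{T',x}=1$ together with convexity of $t\mapsto t^2$, $|c_x|^2\le\sum_{T'\in\mcT_h(x)}|\varphi_{T',x}^*(w_{T'})|^2$, while mapping each term to $\widehat T$ and invoking norm equivalence on the finite-dimensional space $\mathbb{P}_k(\widehat T)$ gives $|\varphi_{T',x}^*(w_{T'})|^2\lesssim h^{-2|\alpha_x|}h^{-d}\|w_{T'}\|_{T'}^2$. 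The powers of $h$ cancel, so $\|A_h w\|_T^2\lesssim\sum_{x\in\mcX_T}\sum_{T'\in\mcT_h(x)}\|w_{T'}\|_{T'}^2$; summing over $T\in\mcT_h$ and using the uniformly bounded overlap (bounded number of nodes per element and of elements per node, from shape regularity) yields $\|A_h w\|_{\mcT_h}^2\lesssim\|w\|_{\mcT_h}^2$, which is (\ref{eq:Ah-bounded}).

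Once A3 is in hand, (\ref{eq:A_Hstab}) is exactly the bound $\|\nabla^j A_h v\|_{\mcTh}\lesssim h^{-j}\|v-v'\|_{\mcTh}+\|\nabla^j v\|_{\mcTh}$ of (\ref{eq:A_stab_aux1})--(\ref{eq:A_stab_aux2}), valid for every $v'\in V_h$ because $(A_h-I)v'=0$ by (\ref{eq:Ah-identity}), the inverse inequality A1 holds in $W_h$, and $A_h$ is $L^2$-stable; taking the infimum over $v'\in V_h$ finishes the proof. I expect the only delicate point to be the bookkeeping of $h$-powers in the scaling step --- confirming that the negative powers produced by the derivative functionals $D^{\alpha_x}$ are matched exactly by the normalization of the Lagrange basis $\varphi_{T,x}$ --- together with the appeal to quasi-uniformity and shape regularity for the finite-overlap count; no genuinely new argument beyond classical finite-element scaling is required.
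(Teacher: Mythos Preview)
Your proposal is correct and follows essentially the same route as the paper: linearity and the reproduction identity are immediate from the construction, the $L^2$-stability is obtained by an element-wise scaling/norm-equivalence argument (the paper packages your two separate bounds on $\|\varphi_{T,x}\|_T$ and $|\varphi_{T',x}^*(w_{T'})|$ into a single equivalence $\|v\|_T^2\sim\sum_{x}h^{d+2|\alpha_x|}|\varphi_x^*(v)|^2$, and uses Cauchy--Schwarz where you use Jensen), and the consequence (\ref{eq:A_Hstab}) is read off from the chain (\ref{eq:A_stab_aux1})--(\ref{eq:A_stab_aux2}) exactly as you indicate.
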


\begin{proof} First we note that $A_h$ is linear, since the average $\langle \cdot \rangle_x$ 
is linear and $\varphi^*_{x,T}$ are linear functionals, and that $A_h$
is the identity on $V_h$ by 
construction. Next we note that the bound \eqref{eq:A_Hstab} was shown in Lemma
\ref{lem:stab_E} (inequalities \eqref{eq:A_stab_aux1} -\eqref{eq:A_stab_aux2}) under
the boundedness assumption (\ref{eq:Ah-bounded}). To verify the boundedness (\ref{eq:Ah-bounded}) 
we note that we have an equivalence of the form 
\begin{equation}\label{eq:equivalence-element}
\|v\|^2_T \sim \sum_{x\in \mcX_{h,T}} h^{d-|\alpha_x|} | \varphi^*_x(v)|^2  
\end{equation}
where $|\alpha_x|$ is the order of the differential operator $D^{\alpha_x}$. This equivalence follows 
by mapping to the reference element, application of equivalence of norms in finite dimension and
then mapping back to the physical element. We then have 
\begin{align}
\|A_h v\|^2_{\mcT_h} &= \sum_{T \in \mcT_{h}} \|A_h v\|^2_{T}  
\end{align}
and for each element contribution we use the equivalence (\ref{eq:equivalence-element}) followed by 
the Cauchy-Schwarz inequality
\begin{align}
\|A_h v\|^2_{T} 
&\lesssim   \sum_{x\in \mcX_{h,T}} h^{d-|\alpha_x|} \Big| \sum_{T \in \mcT_h(x)}  \kappa_{T,x} \varphi_{T,x}^*(v_T) \Big|^2
\\
& \lesssim  \sum_{x\in \mcX_{h,T}}  h^{d-|\alpha_x|} \Big( \sum_{T \in \mcT_h(x)}  \kappa^2_{T,x} \Big) 
\Big( \sum_{T \in \mcT_h(x)}  |\varphi_{T,x}^*(v_T)|^2 \Big) 
\\
&
 \lesssim \sum_{x\in \mcX_{h,T}}  h^{d-|\alpha_x|}
\Big( \sum_{T \in \mcT_h(x)}  |\varphi_{T,x}^*(v_T)|^2 \Big) 
\\
&
 \lesssim \sum_{x\in \mcX_{h,T}}  
 \sum_{T \in \mcT_h(x)} \Big(  h^{d-|\alpha_x|} |\varphi_{T,x}^*(v_T)|^2 \Big) 
\\
&
 \lesssim \sum_{x\in \mcX_{h,T}}  
 \sum_{T \in \mcT_h(x)}  \sum_{x\in \mcX_{h,T}} \Big(  h^{d-|\alpha_x|} |\varphi_{T,x}^*(v_T)|^2 \Big) 
\\
&
 \lesssim \sum_{x\in \mcX_{h,T}} \sum_{T \in \mcT_h(x)}  \| v \|^2_T
 \\
 &
 \lesssim   \| v \|^2_{\mcT_h(T)}
\end{align}
where we used shape regularity to conclude that there is a uniform bound on the number of elements 
sharing a node $x$. 
\end{proof}

\subsection{Construction of $\boldmath{F}_{\boldmath{h}}$}
\paragraph{Definition.}
Let $S_h: \mcT_{h,B}\rightarrow \mcT_{h,I}$ be a mapping that to each $T \in \mcT_{h,B}$ 
associates an element $T \in \mcT_{h,I}$  and assume that there is a constant such that for 
all $h \in (0,h_0]$ and $T \in \mcT_{h,B}$, 
\begin{equation}\label{eq:assumption}
\text{diam}(T\cup S_h(T)) \lesssim h
\end{equation}
where the hidden constant depends only on the shape regularity of
the mesh and the geometry of the interface. In fact we will show that for a Lipshitz domain there 
is such a mapping for $h_0$ small enough, see Lemma \ref{lem:Nh-map} below. 
We extend $S_h$ from $\mcT_{h,B}$ to $\mcT_h$ by letting $S_h(T) = T$ for $T\in \mcT_{h,I}$. 

For $v \in \mathbb{P}_k(T)$ we let $v^e \in \mathbb{P}_k(\IR^d)$ denote the canonical extension 
such that $v^e|_T = v$. We define the discrete extension operator $F_h : W_{h,I} \rightarrow W_h$ 
by
\begin{align}\label{eq:Fh}
\boxed{(F_h v )|_T = (v|_{S_h(T)})^e|_T} 
\end{align}

\paragraph{Macro Element Partition.}
Defining for each $T \in \mcT_h$ the extended element $T^E$, as the union of all elements $T'$ that are mapped 
to $T$ by $S_h$,
\begin{equation}
T^E = \cup_{T' \in S_h^{-1} (T)} T'
\end{equation}
and the resulting partition of $\Omega_h$,
\begin{equation}
\mcT_h^E = \{ T^E \, | \, T \in \mcT_h \}
\end{equation}
into macro elements $T^E$ of diameter $\text{diam}(T^E) \lesssim h$.  We also note that with this notation 
\begin{equation}
W_h^E = F_h (W_{h,I} ) = \bigoplus_{T^E \in \mcT_h^E} \mathbb{P}_k(T^E)
\end{equation}
i.e. the extension of $W_{h,I}$ is precisely the space of discontinuous piecewise polynomials on the 
macro element partition $\mcT_h^E$.

For the next lemma we recall, see Theorem 1.2.2.2 in \cite{Gr11}, that the Lipschitz property of the boundary is 
equivalent to following uniform cone property of the domain. Let $\text{Cone}_{\theta,\delta}(x)$ denote 
the open cone with vertex $x$, opening angle $\theta \in (0,\pi/2)$, and height $\delta>0$. The open domain 
$\Omega \subset \IR^d$ satisfies the  uniform cone property if for each $x \in \partial \Omega$ there is an open cone 
\begin{equation}
\text{Cone}_{\theta_0,\delta_0}(x) \subset \Omega
\end{equation}
with cone parameters $\theta_0$ and $\delta_0$ that are independent of $x$. 

\begin{lem} \label{lem:Nh-map} Assume that $\Omega \subset \IR^d$ is an open domain that satisfies the uniform cone property. 
Then for $h_0$ small enough there is a mapping $S_h:\mcTh \rightarrow \mcT_{h,I}$ that satisfies (\ref{eq:assumption}).
\end{lem}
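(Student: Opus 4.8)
The plan is to construct $S_h$ element by element on $\mcT_{h,B}$ by using the uniform cone property to travel a bounded distance into the interior of $\Omega$. First I would fix $T \in \mcT_{h,B}$ and pick a point $x_T \in \partial\Omega \cap \overline T$ (nonempty since $T$ intersects the boundary but is not contained in $\Omega$). By the uniform cone property there is a cone $\mathrm{Cone}_{\theta_0,\delta_0}(x_T) \subset \Omega$ with $x$-independent parameters. The key geometric observation is that a cone of fixed opening angle $\theta_0$ and fixed height $\delta_0$ contains a ball of radius $r_0 = r_0(\theta_0,\delta_0) > 0$ whose center $y_T$ lies at distance $\mathrm{dist}(y_T,\partial(\mathrm{Cone})) \ge r_0$; in particular this ball lies well inside $\Omega$, at a fixed distance from $\partial\Omega$. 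For $h_0$ small enough that $C_{\mathrm{sr}} h_0 < r_0$ (with $C_{\mathrm{sr}}$ the shape-regularity constant bounding element diameters in terms of $h$), any element $T'$ meeting this ball satisfies $T' \subset \Omega$, i.e. $T' \in \mcT_{h,I}$. So I would set $S_h(T) := T'$ for one such element $T'$, and extend $S_h$ by the identity on $\mcT_{h,I}$ as already stipulated.

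It remains to verify the diameter bound \eqref{eq:assumption}. Both $T$ and $S_h(T) = T'$ have diameter $\lesssim h$ by quasiuniformity/shape regularity. The element $T$ contains $x_T$, the ball $B(y_T,r_0)$ is contained in the cone of height $\delta_0$ with vertex $x_T$, so $y_T$ lies within distance $\delta_0$ of $x_T$; and $T'$ meets $B(y_T,r_0)$, so every point of $T'$ is within distance $2r_0 + \mathrm{diam}(T') \le 2r_0 + C_{\mathrm{sr}} h$ of $y_T$. Chaining these bounds, $\mathrm{diam}(T \cup S_h(T)) \le \mathrm{diam}(T) + |x_T - y_T| + |y_T - (\text{pt of }T')| + \mathrm{diam}(T') \lesssim h + \delta_0 + r_0 + h$. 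Since $r_0$ and $\delta_0$ are fixed constants depending only on $\theta_0,\delta_0$, this is $\lesssim h$ only if we also absorb the additive constant — so I would instead localize: rather than using the full cone, travel a distance of order $h$ into it. Concretely, pick $y_T$ on the cone axis at distance $\min(\delta_0, C h)/2$ from $x_T$; the sub-cone up to that height still contains a ball of radius comparable to $h$ (the opening angle is fixed), hence contains a whole interior element $T'$, and now $|x_T - y_T| \lesssim h$ as well, giving $\mathrm{diam}(T \cup S_h(T)) \lesssim h$ with a constant depending only on $\theta_0$ and the shape regularity.

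The main obstacle is the one just flagged: naively mapping into the cone only gives a bound of the form $\mathrm{diam}(T\cup S_h(T)) \le C(h + \delta_0)$, with $\delta_0$ a fixed constant, which is \emph{not} $\lesssim h$. The fix is to use only the portion of the cone within distance $\mathcal O(h)$ of the boundary point and exploit that the opening angle $\theta_0$ is $h$-independent, so a ball of radius $c(\theta_0) h$ still fits inside that truncated cone for all $h \le h_0$, and this ball — once $h_0$ is small enough that $c(\theta_0) h$ exceeds a few element diameters — contains at least one element entirely inside $\Omega$. One should also check the edge case where $T$ itself already has a subregion at positive distance from $\partial\Omega$ comparable to $h$ but is still flagged as a boundary element; this is harmless since any valid choice of $S_h(T) \in \mcT_{h,I}$ at distance $\lesssim h$ works, and the cone argument produces one. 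No approximation or finite-element machinery is needed — the lemma is purely about the geometry of Lipschitz domains versus quasiuniform meshes.
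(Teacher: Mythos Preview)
Your approach is exactly the paper's: pick $x\in T\cap\partial\Omega$, pass to a sub-cone of height $\sim h$ (rather than the full height $\delta_0$), inscribe a ball of radius $\sim h$, and use quasiuniformity to find an element $\widetilde T\subset B_r\subset\Omega$. One slip in your final paragraph: making $h_0$ small cannot force ``$c(\theta_0)h$ to exceed a few element diameters,'' since both quantities scale like $h$; the correct calibration is to choose the travel-distance multiplier $C$ large enough (depending on $\theta_0$ and the shape-regularity constants) so that the inscribed ball has radius exceeding an element diameter, and \emph{then} take $h_0$ small enough that $Ch_0\le\delta_0$ so the truncated cone still sits inside $\mathrm{Cone}_{\theta_0,\delta_0}(x)\subset\Omega$.
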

\begin{proof} Take an element $T \in \mcT_{h,B}$ and let $x \in T \cap \partial \Omega$. By the uniform cone property there is a cone 
$\text{Cone}_{\theta_0,\delta_0}(x) \subset \Omega$ with opening angle $\theta_0\in (0,\pi/2)$ and height $\delta_0>0$. For $\delta \in (0,\delta_0)$ 
there is an open ball $B_r \subset \text{Cone}_{\theta_0,\delta}(x) \subset \text{Cone}_{\theta_0,\delta_0}(x)$ with radius $r \sim \delta $ since 
the opening angle $\theta_0 \in (0,\pi/2)$ is fixed. Now taking $c \delta =h$ for a sufficiently small constant it follows from quasi uniformity
that there is an element $\widetilde{T} \subset B_r$. This follows since if we consider the elements $\mcT_h(B_r)$ that intersect the ball $B_r$, then 
by shape regularity each such element is contained in a ball $B_{r'}$ with $r'\sim h$ and then the union of the balls $B_r'$ contains $B_r$. For $r' < r/2$ 
one of the balls $B_{r'}$ must be contained in $B_r$. Thus for all $h \in (0,h_0]$ with $h_0$ small enough, dependent on the quasiuniformity 
constants and the cone parameters $\theta_0$ and $\delta_0$, there is an element $\widetilde{T} \subset \Omega$ in a ball centred at $x$ 
with radius proportional to $h$, which concludes the proof.
\end{proof}

\begin{lem} {($F_h$ and $W_h^E$ satisfy assumptions A4 and A2).}
The operator $F_h$ defined by (\ref{eq:Fh}) satisfies
(\ref{eq:Fh-bounded}) and $W_h^E$ satisfies the approximation property (\ref{eq:approx-WhE}).
\end{lem}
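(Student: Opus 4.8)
The plan is to verify the two assertions separately, both by reducing to estimates on a single macro element $T^E$ and exploiting that $F_h$ acts as a polynomial extension there. For the boundedness bound \eqref{eq:Fh-bounded}, fix $T \in \mcT_h$ and recall that $(F_h w)|_T = (w|_{S_h(T)})^e|_T$, the canonical polynomial extension of the polynomial $w|_{S_h(T)} \in \mathbb{P}_k(S_h(T))$ restricted back to $T$. Since $T$ and $S_h(T)$ both lie in the set $(S_h(T))^E$ of diameter $\lesssim h$ by \eqref{eq:assumption}, and a polynomial of degree $k$ on a shape-regular element admits a stable extension to any nearby shape-regular element, a scaling argument gives $\|\nabla^j (F_h w)\|_T \lesssim \|\nabla^j w\|_{S_h(T)}$ for $0 \le j \le l$. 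Summing over $T \in \mcT_h$ and using that each interior element $T' = S_h(T)$ is the image of only a uniformly bounded number of boundary elements (by shape regularity, since all preimages lie within distance $\lesssim h$ of $T'$) yields $\|\nabla^j F_h w\|_{\mcT_h} \lesssim \|\nabla^j w\|_{\mcT_{h,I}}$. The identity \eqref{eq:Fh-identity} is immediate: if $w \in W_h^E = \bigoplus_{T^E} \mathbb{P}_k(T^E)$, then $w$ is a single polynomial on each macro element, so extending its restriction to $S_h(T)$ back to $T$ recovers $w|_T$.

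For the approximation property \eqref{eq:approx-WhE}, the identification $W_h^E = \bigoplus_{T^E \in \mcT_h^E} \mathbb{P}_k(T^E)$ — the full discontinuous piecewise-polynomial space of degree $k$ on the macro-element partition — is the key structural fact, and it is already recorded in the excerpt. Given $v \in H^s(\Omega_h)$ with $0 \le s \le k+1$, I would choose $w_\star|_{T^E}$ to be a local polynomial approximant of $v$ on $T^E$ (e.g. an averaged Taylor / Bramble–Hilbert polynomial on the macro element, or the $L^2$-projection onto $\mathbb{P}_k(T^E)$). Standard Bramble–Hilbert estimates on the shape-regular set $T^E$ of diameter $\lesssim h$ give $\|v - w_\star\|_{H^m(T^E)} \lesssim h^{s-m} |v|_{H^s(T^E)}$ for $0 \le m \le \min(l, s)$; squaring and summing over the macro elements, using that $\{T^E\}$ partitions $\Omega_h$ with finite overlap in the relevant sense, produces \eqref{eq:approx-WhE}. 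Note no inter-element continuity is required since $W_h^E$ is discontinuous, which is exactly why this step is routine.

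The main obstacle is the scaling/extension lemma underlying \eqref{eq:Fh-bounded}: one must control the $H^j$-norm on $T$ of the canonical extension of a degree-$k$ polynomial given on a *different* element $S_h(T)$, not merely on $T$ itself. The point is that on the reference configuration the two elements, after affine mapping, sit at a bounded distance with bounded shape-regularity constants, and the map "polynomial on one reference element $\mapsto$ its canonical extension evaluated on the other" is a linear map between finite-dimensional spaces whose norm is therefore bounded by a constant depending only on $k$, $d$, the shape-regularity constant, and the constant in \eqref{eq:assumption}. Equivalence of norms on $\mathbb{P}_k$ (finite dimension) then lets one pass freely between $\|\cdot\|_{S_h(T)}$ and $\|\nabla^j \cdot\|_{S_h(T)}$ up to powers of $h$; combined with the diameter bound \eqref{eq:assumption} this delivers the stated estimate. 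Everything else is bookkeeping: the quasiuniformity of $\widetilde{\mcT}_h$ and shape regularity guarantee the uniform bound on $\#S_h^{-1}(T')$ and the diameter bound on $T^E$, both of which have already been set up via Lemma~\ref{lem:Nh-map} and the macro-element discussion preceding this statement.
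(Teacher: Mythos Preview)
Your proposal is correct and follows essentially the same route as the paper: polynomial-extension stability on each macro element $T^E$ combined with the finite-overlap bound on $\#S_h^{-1}(T')$ for \eqref{eq:Fh-bounded}, and Bramble--Hilbert on each $T^E$ summed over the partition for \eqref{eq:approx-WhE}. The only cosmetic difference is that the paper encloses each $T^E$ in a ball $B_{T^E,\delta}$ of radius $\sim h$ (and uses an inscribed ball $B_r(x)\subset T$) to carry out both the polynomial inverse estimate and the Bramble--Hilbert step, whereas you invoke a reference-configuration / equivalence-of-norms argument; the ball device is worth keeping in mind since $T^E$ need not itself be star-shaped with uniformly controlled chunkiness.
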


\begin{proof} 
{First note that in view of (\ref{eq:assumption}), $\mcT_h^E = \{S_h^{-1}(T) : T \in \mcT_{h,I} \}$ is a partition of $\Omega_h$ into generalized 
elements all with diameter equivalent to $h$, more precisely, for each element $T^E \in \mcT_h^E$ there is a ball $B_{T^E,\delta}$ with diameter 
$\delta \sim h$ such that $T^E \subset B_{T^E,\delta}$. 

Then to verify that the stability (\ref{eq:Fh-bounded}) holds, we fix
$T^E \in \mcT_h^E $ such that for $T \in  \mcT_{h,I}$ $T^E =
S_h^{-1}(T)$. Note that due to shape regularity there is a ball $B_r(x)$ with radius 
$r \sim h$ and center $x$ such that $B_r(x) \subset T$. Using the
canononical extension $v^e \in \mathbb{P}_{k}(\mathbb{R}^d)$ such that
  $v^e\vert_T = v$ we have the inverse estimate $\| \nabla^j v^e \|_{B_{T^E,\delta}} \lesssim \| \nabla^j v \|_{B_r(x)}$. 
It follows that \begin{align*}
\| \nabla^j F_h w \|^2_{\mcT_h} & \lesssim \sum_{T^E \in \mcT_h^E}\|
\nabla^j  F_h w \|^2_{T^E} \lesssim \sum_{T^E \in \mcT_h^E}\|
\nabla^j  w \|^2_{B_{T^E,\delta}} \\
&  \lesssim \sum_{T^E \in \mcT_h^E}\|
\nabla^j  w \|^2_{B_{T^E,\delta}}\lesssim \sum_{T \in \mcT_{h,I}}\|
\nabla^j  w \|^2_{B_r(x)} \leq \sum_{T \in \mcT_{h,I}}\|
\nabla^j  w \|^2_{T} 
\end{align*}
Here we used the fact that the balls $B_{T^E,\delta}$ have finite overlap,
thanks to the shape regularity assumption.}

To verify  (\ref{eq:approx-WhE}) we recall that $T^E \subset B_{T^E,\delta}$
and directly employ the Bramble-Hilbert lemma, see \cite[Lemma 4.3.8]{BreSco}, 
to conclude that there 
is $w_{T^E} \in \mathbb{P}_k(B_{T^E,\delta})$ such that
\begin{equation}
\| v - w_{T^E} \|_{H^m(T^E)} \leq \| v - w_{T^E} \|_{H^m(B_{T^E,\delta})} \lesssim \delta^{s - m} \| v \|_{H^s(B_{T^E,\delta})}
\end{equation}  
Finally, summing over all the extended elements in $\mcT_h^E$ and
using the fact that $\delta \sim h$ and the finite overlap of the $B_{T^E,\delta}$, the approximation property 
(\ref{eq:approx-WhE}) for $W_h^E$ follows. 
\end{proof}

%

\begin{rem}\label{rem:eh} In practice, we can define the set of elements that have a large intersection 
with the domain as follows,
\begin{equation}
\mcT_{h,\text{large}} = \{ T \in \mcTh : |T \cap \Omega| \geq c h^d \}
\end{equation}
for some positive constant $c$. Then for small enough $c$ we have 
$\mcT_{h,I} \subset \mcT_{h,large}$ and we can define the mapping 
$S_h : \mcT_h \setminus \mcT_{h,large} \rightarrow \mcT_{h,large}$. This approach has the 
advantage that fewer elements are mapped resulting in a simpler map $F_h$.
\end{rem}

\subsection{Interpolation}
Here we will show that under the assumption A2, \eqref{eq:approx-Vh}
and using the operator $A_h$ and the space $W_h^E$ constructed in the
previous section we may construct an interpolation operator $\pi_E:L^2(\Omega) \mapsto V_h^E$
with optimal approximation properties. The basic idea is to extend the function 
outside outside of the domain, interpolate the function in $V_h$, restrict to the 
interior elements and then extend using the discrete extension operator.
\begin{itemize}
\item There is a universal extension operator $E:H^s(\Omega) \rightarrow H^s(\IR^d)$ 
such that 
\begin{align}\label{eq:E-stability}
\| E v \|_{H^s(R^d)} \lesssim \| v \|_{H^s(\Omega)}
\end{align}
see \cite{stein70}.

\item Let $\pi_h:H^1(\Omega_h) \rightarrow V_h$ be an interpolation operator of average 
type, see \cite{Cle75} or \cite{ScoZha90}, that 
satisfies the standard element wise estimate 
\begin{equation}\label{eq:interpol-local}
\| v - \pi_h v \|_{H^m(T)} \lesssim h^{s-m} \| v \|_{H^2(\mcT_h(T))}, \qquad 0 \leq m \leq s \leq k+1
\end{equation} 
with $\mcT_h(T)\subset \mcTh$ the neighboring elements of $T$. Composing $\pi_h$ 
with the continuous extension operator $E$ we obtain an interpolation operator 
$\pi_h \circ E : H^1(\Omega) \rightarrow V_h$ and using the 
stability (\ref{eq:E-stability}) of the continuous extension operator we have 
\begin{equation}\label{eq:interpol}
\| E v - \pi_h E v \|_{\mcTh} \lesssim h^{s-m} \| v \|_{H^2(\Omega_h)}
\lesssim 
h^{2-m} \| v \|_{H^2(\Omega)}, \qquad 0 \leq m \leq s \leq k+1
\end{equation}
For simplicity we use the notation $E v   = v$ and $\pi_h v = \pi_h E v$ when appropriate.

\item We define the interpolation operator $\pi_h^E: H^1(\Omega) \rightarrow 
V_h^E$ by 
\begin{equation}
\boxed{
\pi_h^E u = E_h (\pi_h E u)_I
}
\end{equation}
\end{itemize}

%
%
%
%
%

\begin{lem}{\bf (Interpolation Error Estimate).} \label{lem:interpol-full}There is a constant such that 
\begin{equation}\label{eq:interpol-errorest}
\boxed{\|v - \pi_h^E v \|_{H^m(\Omega_h)}  \lesssim h^{k+1-m} \|v\|_{H^{k+1}(\Omega)}, \qquad 0\leq m \leq l+1}
\end{equation}
\end{lem}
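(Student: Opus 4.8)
The plan is to decompose the interpolation error using the triangle inequality, inserting the intermediate quantity $A_h(Ev)_I$ (or, equivalently, combining the estimates already proven for $E_h$ and $\pi_h$). Write
\[
\|v - \pi_h^E v\|_{H^m(\Omega_h)} = \|v - E_h(\pi_h E v)_I\|_{H^m(\Omega_h)}.
\]
Since $v_\star^E = A_h w_\star \in V_h^E$ from Lemma~\ref{lem:approx} already satisfies the optimal estimate \eqref{eq:approx-prop-VhE}, I would split
\[
\|v - \pi_h^E v\|_{H^m(\Omega_h)} \le \underbrace{\|v - v_\star^E\|_{H^m(\Omega_h)}}_{I} + \underbrace{\|v_\star^E - E_h(\pi_h E v)_I\|_{H^m(\Omega_h)}}_{II}.
\]
Term $I$ is handled immediately by Lemma~\ref{lem:approx}: it is bounded by $h^{k+1-m}|v|_{H^{k+1}(\Omega)}$ for $0 \le m \le l$, and the range $m \le l+1$ requires only a minor extension of that argument (the inverse estimate in A1 gives one more order at the price of one power of $h$, and $V_h^E \subset H^l$ already controls derivatives up to order $l$; the $(l+1)$-st derivative is piecewise well-defined since $V_h^E \subset W_h = \mathbb{P}_k(\mcT_h^E)$).

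For term $II$, I would write $v_\star^E - E_h(\pi_h E v)_I = E_h\big((w_\star)_I - (\pi_h E v)_I\big) = E_h(w_\star - \pi_h E v)_I$, using linearity of $E_h$ and of the restriction $(\cdot)_I$, and noting that $w_\star \in W_h^E$ so $A_h F_h (w_\star)_I = A_h w_\star = v_\star^E$ — this needs $F_h(w_\star)_I = w_\star$ from \eqref{eq:Fh-identity} and then $A_h w_\star$ being exactly the claimed $v_\star^E$. Actually the cleanest route is to observe directly that $\pi_h^E v = E_h(\pi_h E v)_I$ and to apply the stability Lemma~\ref{lem:stab_E} to the argument $(w_\star - \pi_h E v)|_{\mcT_{h,I}} \in V_{h,I}$ — here one must check $w_\star - \pi_h E v \in V_h$, which holds since $w_\star \in W_h^E \subset V_h$ (by A2's hypothesis $V_h \subset W_h \cap H^l$ and the construction, $W_h^E \subset V_h$) and $\pi_h E v \in V_h$. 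Then Lemma~\ref{lem:stab_E} gives, for $0 \le j \le l$,
\[
\|\nabla^j E_h(w_\star - \pi_h E v)_I\|_{\Omega_h} \lesssim \|\nabla^j(w_\star - \pi_h E v)\|_{\Omega_{h,I}} \le \|\nabla^j(w_\star - \pi_h E v)\|_{\Omega_h},
\]
and summing over $0 \le j \le m$ bounds $II$ by $\|w_\star - \pi_h E v\|_{H^m(\Omega_h)} \le \|w_\star - v\|_{H^m(\Omega_h)} + \|v - \pi_h E v\|_{H^m(\Omega_h)}$; both terms on the right are $\lesssim h^{k+1-m}\|v\|_{H^{k+1}(\Omega)}$ by \eqref{eq:approx-WhE} (with $s = k+1$) and by \eqref{eq:interpol} composed with the Stein extension stability \eqref{eq:E-stability}.

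The main obstacle is the range $m = l+1$, which is not covered by the boxed stability estimate in Lemma~\ref{lem:stab_E} (valid only for $j \le l$) nor by Lemma~\ref{lem:approx} (valid only for $m \le l$). To reach $m = l+1$ I would not differentiate $E_h$ directly; instead I would use the inverse inequality A1 on the finite-element function $v_\star^E - \pi_h^E v \in V_h^E \subset W_h$ to trade the $(l+1)$-st derivative for an $l$-th derivative at the cost of $h^{-1}$: $\|v_\star^E - \pi_h^E v\|_{H^{l+1}(\mcT_h)} \lesssim h^{-1}\|v_\star^E - \pi_h^E v\|_{H^l(\mcT_h)}$, then apply the already-established $m = l$ bound on each summand together with term $I$ at level $m=l$. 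This yields the factor $h^{k+1-(l+1)} = h^{-1} \cdot h^{k+1-l}$, consistently. I would also remark that the approximand itself — call it $\pi_h^E v$ — equals $E_h(\pi_h E v)_I$ by definition, so no separate "choice of $v_\star^E$" is needed here; the operator is explicit, and only its error is estimated. One routine point to state carefully: all hidden constants are independent of how $\partial\Omega$ cuts the mesh because every geometric quantity entering (the macro-element diameters, the ball radii $r \sim h$, the overlap counts) depends only on shape regularity and the cone parameters, as established in Lemmas~\ref{lem:Nh-map} and the construction of $F_h$.
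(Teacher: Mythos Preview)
Your overall strategy (triangle-inequality split, then invoke stability and approximation) is sound, but Term~$II$ contains a genuine gap. You assert that $w_\star \in W_h^E \subset V_h$ in order to place $(w_\star - \pi_h E v)_I$ in $V_{h,I}$ and apply Lemma~\ref{lem:stab_E}. That inclusion is false: by construction $W_h^E = \bigoplus_{T^E \in \mcT_h^E} \mathbb{P}_k(T^E)$ is the space of piecewise polynomials on the macro-element partition and is discontinuous across macro-element interfaces, so $W_h^E \not\subset H^l(\Omega_h)$ for $l\ge 1$ and hence $W_h^E \not\subset V_h$. Consequently $(w_\star)_I \notin V_{h,I}$, the operator $E_h$ (whose domain is $V_{h,I}$) cannot be applied to it, and the stability estimate of Lemma~\ref{lem:stab_E}---whose proof genuinely uses that the argument lies in $V_h$---is unavailable for $w_\star - \pi_h E v$. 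The identity $v_\star^E = E_h(w_\star)_I$ you write is therefore not well-posed.

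The repair is to route through $\pi_h E v \in V_h$ rather than $w_\star$. Split $v - \pi_h^E v = (v - \pi_h E v) + (\pi_h E v - E_h(\pi_h E v)_I)$. The first piece is handled by \eqref{eq:interpol} and the Stein stability \eqref{eq:E-stability}. For the second, since $A_h$ is the identity on $V_h$ one has $\pi_h E v - E_h(\pi_h E v)_I = A_h\bigl(\pi_h E v - F_h(\pi_h E v)_I\bigr)$; now apply the inverse estimate A1 and the $L^2$-boundedness of $A_h$, then insert $w_\star\in W_h^E$ and use $F_h(w_\star)_I = w_\star$ from A4 together with the boundedness of $F_h$ to reduce to $\|\pi_h E v - w_\star\|_{\mcT_h}\le \|\pi_h E v - v\|_{\mcT_h} + \|v - w_\star\|_{\mcT_h}\lesssim h^{k+1}|v|_{H^{k+1}}$. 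This is what the paper's terse proof (``$\pi_h$ is the identity on $V_h$, $\pi_h$ is bounded, the approximation property, Stein extension stability'') is pointing at. Your treatment of the endpoint $m=l+1$ via an inverse inequality on the discrete difference is correct and fits this argument without change.
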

\begin{proof}  Follows directly from the following facts, $\pi_h$ is the identity on $V_h$, $\pi_h$ is bounded, the approximation property 
in Lemma \ref{lem:approx}, and the stability (\ref{eq:E-stability}) of the continuous extension operator. 
\end{proof}


\subsection{Some Examples}
\paragraph{Continuous Piecewise Polynomials.} Let $\mcT_h$ be the active mesh 
covering the domain $\Omega$ consisting of simplexes or cubes and consider standard $C^0$ Lagrange elements 
of order $p$. For an element $T\in \mcT_h$ the local finite element space is $P_k(T)$ on simplexes and tensor product 
polynomials $Q_k(T)$ on cubes. Let $\mcX_T$ be the set of nodes associated with the element $T$, and let $\{ v(x) \, : \, \}_{x \in \mcX_T}$ 
be the set of degrees of freedom with corresponding dual basis $\{\varphi^*_x \}_{x \in \mcX_T}$ where $\varphi^*_x(v) = 
v(x)$. The Lagrange basis is defined by 
\begin{equation}
\varphi_x(y) = \delta_{xy}, \qquad x,y \in \mcX_T
\end{equation}


\paragraph{Hermite Splines.} Here we consider the family of tensor product spaces of $C^{(k-1)/2}$ continuous Hermite splines 
of order $k$, where $k$ is an odd number.
\begin{itemize}
\item Let $P_k(I)$ be the space of polynomials of odd order $k$ on the reference interval $I=[0,1]$. The 
dimension of $P_k [0,1]$ is $k+1$, which is even for odd $k$, and the  set of  Hermite degrees of 
freedom, is 
\begin{equation}
\{v^{(l)}(\xi)\, : l=0,1,\dots,(k+1)/2,\, \xi \in \{0,1\} \}
\end{equation}
where $v^{(l)}$ denote the derivative of order $l$ of the function $v$. Here we have $(k+1)/2$ degrees of freedom associated with 
each node $\xi \in \{0,1\}$ and therefore we need the generalized nodes
\begin{equation}
\mcX_I = \{x=(l,\xi) \,:\, l=0,1,\dots,(k+1)/2, \xi \in \{0,1\} \}
\end{equation}
The dual basis is
\begin{equation}
\{ \varphi_x \}_{x \in \mcX_I}
\end{equation}
where for $x=(l, \xi)$ we have $\varphi^*_{(l,\xi)} ( v ) = v^(l)(\xi)$. Finally, the Lagrange basis $\{\varphi_x\}_{x \in \mcX_I}$ 
is defined by the equations $\varphi^*_x(\varphi_y) = \delta_{xy}$, $x,y \in \mcX_I$, which means that for 
$(l,\xi),  (\widetilde{l},\widetilde{\xi})\in \mcX_I$, 
\begin{align}
\varphi^{(\widetilde{l})}_{(l,\xi)}(\widetilde{\xi}) =
\begin{cases}
1 & \text{$l = \widetilde{l}$ and $\xi = \widetilde{\xi}$}
\\
0 & \text{otherwise}
\end{cases}
\end{align}

\item Let $\widetilde{\mcT}_h$, $h\in (0,h_0]$, be a family of 
partitions of $\IR^d$ into cubes with side $h$.  Let $\widetilde{V}_h$ be 
the space consisting of tensor products of odd order Hermite 
splines on $\widetilde{\mcT}_h$. 

\item 
Let $\mcT_h = \{T \in \widetilde{\mcT}_h : T \cap \Omega \neq \emptyset\}$ 
be the active mesh. Let $V_h$ be the restriction of $\widetilde{V}_h$ to  $\mcT_h$.

%

\end{itemize}

\paragraph{Nonconforming Elements.} Our framework applies to nodal nonconforming piecewise polynomial elements, for 
instance, the Morley elements and the Crouzeix-Raviart elements. It is however important to note that the error analysis of these 
elements rely on the orthogonality properties of the discontinuities at the faces, which in general does not hold for faces that are 
cut since then only part of the integral is present in the form. Using 
a discontinuous Galerkin formulation on all faces that intersect the boundary we obtain a stable method with optimal order convergence.
 Let us consider the Crouzeix-Raviart elements for simplicity. The nodes $\mcX_T$ associated with the simplex $T$ is the midpoints of 
 the faces and the degrees of freedom are the function values in the midpoints. Then the average of the jump in the finite element functions 
 are  zero for all faces residing in the interior of $\Omega$, while for faces that cuts the boundary this is not the case. Therefore on all faces 
 intersecting the boundary we add the standard symmetric interior penalty terms leading to a method with optimal order convergence.

\section{Abstract Framework for CutFEM using Extended FE Spaces
  and Nitsche's Method}\label{sec:abstract}
In this section we apply our framework to an abstract Nitsche method which can be used to analyse several relevant situations including 
boundary and interface problems of different order. 

Consider approximating an abstract boundary value problem: find $u \in V_{bc} \subset V$ such that 
\begin{align}\label{eq:abstract-mod-prob}
a_\Omega(u,v) = l(v)\qquad \forall v \in V_{bc,0}
\end{align}
where the boundary conditions are strongly enforced in $V_{bc}$ and $V_{bc,0}$ is the corresponding space with homogeneous 
boundary conditions. We assume that $a_\Omega$ is continuous and coercive, and that $l$ is continuous. Then it follows from the Lax-Milgram lemma that there is a unique solution to (\ref{eq:abstract-mod-prob}).

Next consider an abstract Nitsche type approximation of (\ref{eq:abstract-mod-prob}) with weak enforcement of the boundary 
conditions : find $u_h \in V_h^E$ such that 
\begin{align}\label{eq:abstract-nit}
a_h(u_h,v) = l_h(v)\qquad \forall v \in V_h^E
\end{align}
The form $a_h$ is defined by
\begin{align}
a_h(v,w) = a_\Omega(v,w) - a_{\partial \Omega}(v,w) - a_{\partial \Omega}(w,v) + \beta b(v,w)
\end{align} 
and  $l_h$ is defined by
\begin{equation}
l_h(v) = l(v) - a_{\partial \Omega}(v,u) + \beta b(v,u)
\end{equation}
The rationale for the Nitsche formulation is to extend the bilinear
form $a$ to $a_h$ in such a way that the solution to
\eqref{eq:abstract-mod-prob} also is a solution to
\eqref{eq:abstract-nit}. In particular we require
\begin{equation}
a_\Omega(u,v) - a_{\partial \Omega}(u,v)= l(v) \qquad \forall v \in V_h^E
\end{equation}
However, since the test space in \eqref{eq:abstract-nit} no longer satisfies boundary conditions this
may require some additional regularity of $u$ so that the form $ a_{\partial
  \Omega}(v,w)$ is well defined for $v=u$, we formally denote the
space of functions with the required additional regularity by $\widetilde{V}$ and note that $\widetilde{V} \subset V$. 
Observe that we do not require the problem \eqref{eq:abstract-nit} to be well posed in the sense 
that the form $a_h$ is coercive on the continuous level but it should be well defined.

We assume that the following properties hold.
\begin{description}
\item[B1] There is a norm $\tn \cdot \tn_{\Omega}$ on $\widetilde{V} + V_h^E$ such that the form $a_\Omega$ 
is continuous
\begin{equation}
a_\Omega(v,w)\lesssim \tn v \tn_{\Omega} \tn w \tn_{\Omega} \qquad v,w \in \widetilde{V} +V_h^E
\end{equation}
 and coercive
\begin{equation}\label{eq:B1-coer}
\tn v \tn^2_{\Omega} \lesssim a_\Omega(v,v)\qquad v \in V_h^E
\end{equation}

\item[B2] The form $b$ induces a seminorm $\|\cdot \|_b$ on $\widetilde{V} + V_h^E$,
and there is a seminorm $\tn \cdot \tn_{\partial \Omega}$ on $\widetilde{V} + V_h^E$ such that 
\begin{equation}\label{eq:abs-bnd-cont}
|a_{\partial \Omega}(v,w)| \lesssim \tn v \tn_{\partial \Omega} \| w
\|_b \qquad v,w \in \widetilde{V} + V_h^E
\end{equation}

\item[B3] The seminorm $\tn \cdot \tn_{\partial \Omega}$ satisfies the
  inverse estimate 
\begin{equation}\label{eq:abs-inverse}
\tn v \tn_{\partial \Omega} \lesssim \tn v \tn_{\Omega}\qquad v \in V_h^E
\end{equation}
and as a consequence of (\ref{eq:abs-bnd-cont}) it follows that
\begin{equation}\label{eq:abs-bnd-disc}
|a_{\partial \Omega}(v,w)| \lesssim \tn v \tn_{\Omega} \| w \|_b \qquad v,w \in V_h^E
\end{equation}

\item[B4] The functional $l_h$ is continuous on $V_h^E$
\begin{equation}
|l_h(v)| \lesssim \tn v \tn_{h}\qquad v \in V_h^E
\end{equation}
where the energy norm is defined by
\begin{equation}
\tn v \tn^2_h = \tn v \tn^2_\Omega + \tn v \tn^2_{\partial \Omega} + \| v\|^2_b
\end{equation}

\item[B5]  The method  (\ref{eq:abstract-nit}) is consistent in the
  sense that for $u \in \widetilde{V}$, solution to (\ref{eq:abstract-mod-prob}) 
satisfies (\ref{eq:abstract-nit}), 
\begin{equation}\label{eq:consistency}
a_h(u,v) = l_h(v) \qquad \forall v \in \widetilde{V} + V_h^E
\end{equation}
\end{description}
\begin{rem} Note that the coercivity (\ref{eq:B1-coer}) typically holds for a larger space than $V_h^E$, but since the coercivity 
for the Nitsche method, which we establish in (\ref{eq:coerciv}) below, only holds on $V_h^E$ it is enough to assume coercivity of 
$a_\Omega$ on $V_h^E$. 

\end{rem}
\begin{rem}
The norms $\tn \cdot \tn_h$, $\tn \cdot \tn_{\partial \Omega}$, and $\| \cdot \|_b$ are in general mesh dependent norms, and we will 
specify them precisely in the forthcoming examples. In fact in assumptions B1-B5 the index $h$ is only used to indicate the discrete space and 
the discrete forms.
\end{rem}
\begin{rem} The key property for cut finite element methods is the inverse inequality (\ref{eq:abs-inverse}) in B3, which in general does 
not hold without some modification of the method or finite element space. For instance, adding some type of stabilization such as least squares 
control over the jumps in derivatives across faces or, as in this paper, using an extended finite element space.
\end{rem}

\subsection{Properties of the Abstract Method}
Starting from the assumptions B1-B5 we derive the key properties of the abstract Nitsche method.

\begin{lem} If B1-B3 hold, then the form $a_h$ is continuous 
\begin{equation}\label{eq:continuity}
a_h(v,w)\lesssim \tn v \tn_{h} \tn w \tn_{h} \qquad v,w \in \widetilde{V}+V_h^E
\end{equation}
and for $\beta$ large enough coercive 
\begin{equation}\label{eq:coerciv}
\tn v \tn^2_{h} \lesssim a_h(v,v)\qquad v \in V_h^E
\end{equation}
\end{lem}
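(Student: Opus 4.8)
The plan is to prove continuity and coercivity separately, using the structural decomposition of $a_h$ together with assumptions B1--B4.

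For continuity, I would expand $a_h(v,w) = a_\Omega(v,w) - a_{\partial\Omega}(v,w) - a_{\partial\Omega}(w,v) + \beta b(v,w)$ and bound each term. The $a_\Omega$ term is handled by the continuity in B1, giving $\tn v\tn_\Omega\tn w\tn_\Omega \le \tn v\tn_h\tn w\tn_h$. The two $a_{\partial\Omega}$ terms are controlled by \eqref{eq:abs-bnd-cont} in B2: $|a_{\partial\Omega}(v,w)| \lesssim \tn v\tn_{\partial\Omega}\|w\|_b \le \tn v\tn_h\tn w\tn_h$, and symmetrically for $a_{\partial\Omega}(w,v)$. The $b$ term is bounded by $\|v\|_b\|w\|_b \le \tn v\tn_h\tn w\tn_h$ using Cauchy--Schwarz on the seminorm and the definition of $\tn\cdot\tn_h$. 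Summing these and absorbing $\beta$ into the hidden constant (for fixed $\beta$) gives \eqref{eq:continuity}.

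For coercivity, I would test with $v = w \in V_h^E$. Then $a_h(v,v) = a_\Omega(v,v) - 2a_{\partial\Omega}(v,v) + \beta\|v\|_b^2 \ge c_1\tn v\tn_\Omega^2 - 2|a_{\partial\Omega}(v,v)| + \beta\|v\|_b^2$ using the coercivity \eqref{eq:B1-coer} in B1 (valid on $V_h^E$). The cross term is the delicate one: by \eqref{eq:abs-bnd-disc} in B3 (which already incorporates the inverse estimate), $|a_{\partial\Omega}(v,v)| \lesssim \tn v\tn_\Omega\|v\|_b$, and then Young's inequality splits this as $\tn v\tn_\Omega\|v\|_b \le \tfrac{\epsilon}{2}\tn v\tn_\Omega^2 + \tfrac{1}{2\epsilon}\|v\|_b^2$. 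Choosing $\epsilon$ small enough relative to $c_1$ absorbs the $\tn v\tn_\Omega^2$ part into the coercivity term, leaving a negative multiple of $\|v\|_b^2$ that is dominated by $\beta\|v\|_b^2$ once $\beta$ is chosen large enough. This yields $a_h(v,v) \gtrsim \tn v\tn_\Omega^2 + \|v\|_b^2$. To finish, I need to also recover control of $\tn v\tn_{\partial\Omega}^2$; this comes for free from the inverse estimate \eqref{eq:abs-inverse} in B3, $\tn v\tn_{\partial\Omega}^2 \lesssim \tn v\tn_\Omega^2$, so $\tn v\tn_h^2 = \tn v\tn_\Omega^2 + \tn v\tn_{\partial\Omega}^2 + \|v\|_b^2 \lesssim \tn v\tn_\Omega^2 + \|v\|_b^2 \lesssim a_h(v,v)$.

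The main obstacle is the bookkeeping in the coercivity estimate: one must be careful that the inverse inequality in B3 is used in the form \eqref{eq:abs-bnd-disc} to make the cross term absorbable by the $a_\Omega$-coercivity (rather than needing the full $\tn\cdot\tn_{\partial\Omega}$ to be controlled by $\|\cdot\|_b$, which is not assumed), and to track that the required size of $\beta$ depends only on the hidden constants in B1 and B3, not on the mesh. Everything else is routine triangle-inequality and Young-inequality manipulation. I would present the argument with the explicit Young's inequality parameter to make the dependence of $\beta$ transparent.
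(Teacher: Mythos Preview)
Your proposal is correct and follows essentially the same route as the paper: both arguments expand $a_h$ term by term for continuity, and for coercivity both use the coercivity in B1, bound the cross term via \eqref{eq:abs-bnd-disc}, apply Young's inequality with a free parameter, and finally recover $\tn v\tn_{\partial\Omega}$ from the inverse estimate \eqref{eq:abs-inverse}. One minor slip: you mention B1--B4 at the outset, but B4 is never invoked (nor needed), so keep the hypotheses to B1--B3 as in the statement.
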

\begin{proof} Continuity follows directly from B1-B2, 
\begin{align}
a_h(v,w) &=  a_\Omega(v,w) - a_{\partial \Omega}(v,w) - a_{\partial \Omega}(w,v) + \beta b(v,w)
\\
&\leq \tn v \tn_h \tn w \tn_\Omega + \tn v \tn_{\partial \Omega} \| w \|_b + \tn w \tn_{\partial \Omega} \| v \|_b + \beta \| v \|_b \| w \|_b
\\
&\leq\max(1,\beta) (\tn v \tn^2_h + \tn v \tn^2_{\partial \Omega} + \| v \|^2_b)^{1/2} (  \tn w \tn^2_\Omega +  \tn w \tn^2_{\partial \Omega}  + \| w \|^2_b )^{1/2}
\\
&\lesssim 
\tn v \tn_h \tn w \tn_h
\end{align}
Coercivity follows using B1-B3 and in particular (\ref{eq:abs-bnd-disc}), 
\begin{align}
\tn v \tn_h^2 &=a_h(v, v) 
\\
&= \tn v \tn_\Omega^2 - 2 a_{\partial \Omega} (v, v) + \beta \| v \|^2_b
\\
&\geq  \tn v \tn_\Omega^2 - 2 C \tn v \tn_\Omega \| v \|_b + \beta \| v \|^2_b
\\
&\geq  \tn v \tn_\Omega^2 -  \delta C^2 \tn v \tn^2_\Omega + \delta^{-1} \| v \|_b + \beta \| v \|^2_b
\\
&\geq (1 - C^2 \delta )  \tn v \tn_\Omega^2 + (\beta - \delta^{-1} )\| v \|^2_b
\end{align}
Taking $\delta$ small enough, and $\beta$ large enough we obtain 
\begin{equation}
 \tn v \tn_\Omega^2 +\| v \|^2_b \lesssim a_h(v,v) \qquad v \in V_h^E
\end{equation}
Finally using (\ref{eq:abs-inverse}) the coercivity follows.
\end{proof}

\begin{thm} \label{thm:best_approx}  If B1-B5 hold there exists a unique solution to
  (\ref{eq:abstract-nit}) and the following best approximation estimate holds
\begin{equation}\label{eq:abs-best-approx}
\boxed{\tn u - u_h \tn_h \lesssim \tn u - v \tn_h \qquad \forall v \in V_h^E}
\end{equation}
\end{thm}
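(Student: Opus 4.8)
The plan is to prove Theorem~\ref{thm:best_approx} by the standard Strang-type argument for consistent Nitsche methods, using the continuity and coercivity of $a_h$ on $V_h^E$ established in the preceding lemma together with the consistency assumption B5.

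First I would establish existence and uniqueness: the bilinear form $a_h$ is coercive on the finite-dimensional space $V_h^E$ by \eqref{eq:coerciv} and the functional $l_h$ is continuous on $V_h^E$ by B4, so the Lax--Milgram lemma (in finite dimensions, just invertibility of a coercive square system) gives a unique $u_h \in V_h^E$ solving \eqref{eq:abstract-nit}.

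Next, for the error estimate, I would fix an arbitrary $v \in V_h^E$ and write $u - u_h = (u - v) + (v - u_h)$, with $v - u_h \in V_h^E$. The point is to bound the discrete part $\tn v - u_h \tn_h$. Applying coercivity \eqref{eq:coerciv} to $v - u_h \in V_h^E$ gives $\tn v - u_h \tn_h^2 \lesssim a_h(v - u_h, v - u_h)$. Now split $a_h(v - u_h, v - u_h) = a_h(v - u, v - u_h) + a_h(u - u_h, v - u_h)$. For the first term, continuity \eqref{eq:continuity} — which holds on $\widetilde V + V_h^E$, so it covers $v - u$ — gives $a_h(v - u, v - u_h) \lesssim \tn v - u \tn_h \tn v - u_h \tn_h$. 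For the second term, I would invoke Galerkin orthogonality: since $u_h$ solves \eqref{eq:abstract-nit} and, by the consistency B5 \eqref{eq:consistency}, $u$ also satisfies $a_h(u, \cdot) = l_h(\cdot)$ on $V_h^E$, we get $a_h(u - u_h, w) = 0$ for all $w \in V_h^E$; in particular $a_h(u - u_h, v - u_h) = 0$. Hence $\tn v - u_h \tn_h^2 \lesssim \tn v - u \tn_h \tn v - u_h \tn_h$, so $\tn v - u_h \tn_h \lesssim \tn u - v \tn_h$, and the triangle inequality $\tn u - u_h \tn_h \le \tn u - v \tn_h + \tn v - u_h \tn_h \lesssim \tn u - v \tn_h$ finishes the proof. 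Since $v \in V_h^E$ was arbitrary we may take the infimum.

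The only subtlety — and the place where one must be slightly careful rather than where any real difficulty lies — is making sure the continuity and orthogonality are applied on the right spaces: continuity of $a_h$ must be used on the pair $(v - u, v - u_h)$ which lies in $\widetilde V + V_h^E$ (this is exactly why B1--B2 are assumed on $\widetilde V + V_h^E$ and why \eqref{eq:continuity} is stated there), while coercivity and Galerkin orthogonality are only available on $V_h^E$, which is all we need since $v - u_h \in V_h^E$. Everything else is a routine triangle-inequality and Lax--Milgram argument; I do not expect any genuine obstacle.
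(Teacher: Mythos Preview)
Your proposal is correct and follows essentially the same route as the paper: Lax--Milgram for existence/uniqueness, then the triangle-inequality split $u-u_h=(u-v)+(v-u_h)$, coercivity on $v-u_h\in V_h^E$, the decomposition $a_h(v-u_h,v-u_h)=a_h(v-u,v-u_h)+a_h(u-u_h,v-u_h)$, Galerkin orthogonality from B5 to kill the second term, and continuity \eqref{eq:continuity} on $\widetilde V+V_h^E$ for the first. Your remark on which spaces support coercivity versus continuity is exactly the care the argument requires.
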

\begin{proof} Since $a_h$ is coercive and continuous on $V_h^E$ and according to B4 the functional $l_h$ is continuous on $V_h^E$ it 
follows from the Lax-Milgram lemma that there is a unique solution $u_h \in V_h^E$ to (\ref{eq:abstract-nit}).

To prove the error estimate (\ref{eq:abs-best-approx}) we add and subtract $v \in V_h^E$, and using the triangle inequality we then have 
\begin{align}\label{eq:abs-lem-err-a}
\tn u - u_h \tn_h \leq \tn u - v \tn_h + \tn v - u_h \tn_h 
\end{align}
for the second term we use the fact that $v-u_h \in V_h^E$ and apply the coercivity, then we add and subtract the exact solution $u$, 
employ the consistency, and finally use the continuity to conclude that  
\begin{align}
\tn v - u_h \tn_h^2 &\lesssim a_h(v - u_h,v-u_h) 
\\
&= a_h(v - u,v-u_h) + a_h(u - u_h,v-u_h) 
\\
&= a_h(v - u,v-u_h) + \underbrace{a_h(u,v-u_h) - l_h(v-u_h)}_{=0}
\\
&= a_h(v - u,v-u_h)
\\
&\lesssim  \tn v - u \tn_h \tn_h \tn v - u_h \tn_h  
\end{align}
Thus we have 
\begin{align}
\tn v - u_h \tn_h \lesssim  \tn v - u \tn_h
\end{align}
which combined with (\ref{eq:abs-lem-err-a}) completes the proof of (\ref{eq:abs-best-approx}).
\end{proof}

Assuming that we have a family of finite element spaces, with mesh parameter $h\in (0,h_0]$, which 
satisfies the approximation property 
\begin{equation}\label{eq:abs_approx}
\inf_{w \in V_h^E} \tn v - w \tn_h \lesssim h^{k-l} \| u \|_{H^k(\Omega)}
\end{equation}
where $k$ is the approximation order of the finite element space, we obtain the following error estimate 
for an elliptic operator of order $2l$, 
\begin{equation}
\tn u - u_h \tn_h \lesssim  h^{k-l} \| u \|_{H^k(\Omega)}
\end{equation}

Error estimates in weaker norms can be obtained if an additional regularity
assumption holds. We assume that the following elliptic shift estimate is
satisfied by the solution $u \in V_{bc,0}$ to \eqref{eq:abstract-mod-prob}, 
\begin{equation}\label{eq:abs_elliptic}
|u|_{H^{2l-s}(\Omega)} \lesssim \|f\|_{H^{-s}(\Omega)}, \qquad 0\leq s \leq l
\end{equation}
where $2l$ is the order of the operator.

\begin{thm}\label{eq:L2_abs}  Assuming that assumptions B1-B5, the approximation property  
\eqref{eq:abs_approx} with $k = 2l$, and the elliptic shift estimate (\ref{eq:abs_elliptic}) hold. Then
\begin{equation}\label{eq:abstract_L2}
\boxed{
\|u - u_h\|_{H^s(\Omega)} \lesssim h^{l-s} \tn u - v \tn_h \qquad \forall v \in V_{h,E}, \qquad 0\leq s \leq l
}
\end{equation}
\end{thm}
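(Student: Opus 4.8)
The plan is to use a standard Aubin--Nitsche duality argument adapted to the Nitsche-type CutFEM setting. First I would introduce the dual problem: for $0 \le s \le l$, pick the Riesz representer $\phi \in V_{bc,0}$ of the error measured in $H^s(\Omega)$, i.e. the solution of $a_\Omega(v,\phi) = (u-u_h, v)_{H^s(\Omega)}$ (or the appropriate duality pairing) for all $v \in V_{bc,0}$. By the elliptic shift estimate \eqref{eq:abs_elliptic} applied with right-hand side $f \in H^{-s}(\Omega)$ corresponding to the error functional, we get $\|\phi\|_{H^{2l-s}(\Omega)} \lesssim \|u-u_h\|_{H^s(\Omega)}$ (after identifying $\|u-u_h\|_{H^s(\Omega)}$ with the relevant dual norm). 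The regularity $2l-s$ is in the range $[l, 2l]$, so with $k=2l$ the approximation property \eqref{eq:abs_approx} / Lemma~\ref{lem:approx} gives an interpolant $\phi_\star^E \in V_h^E$ with $\tn \phi - \phi_\star^E \tn_h \lesssim h^{(2l-s)-l}\|\phi\|_{H^{2l-s}(\Omega)} = h^{l-s}\|\phi\|_{H^{2l-s}(\Omega)}$.

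Next I would write the error representation. Since the dual problem is posed on the continuous level, I expand $\|u-u_h\|_{H^s(\Omega)}^2$ (or the corresponding pairing) $= a_\Omega(u - u_h, \phi)$, then insert the Nitsche boundary terms to rewrite this in terms of $a_h$: using that $\phi$ satisfies homogeneous boundary conditions one has $a_h(u-u_h, \phi) = a_\Omega(u-u_h,\phi)$ up to the boundary contributions, which vanish or can be controlled because $\phi \in V_{bc,0}$ (so $\|\phi\|_b$ and $\tn \phi \tn_{\partial\Omega}$ carry the boundary data, which is zero). Then Galerkin-type orthogonality from consistency B5 together with the discrete equation \eqref{eq:abstract-nit} gives $a_h(u-u_h, \phi_\star^E) = 0$, hence $\|u-u_h\|^2_{H^s(\Omega)} = a_h(u-u_h, \phi - \phi_\star^E)$. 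Applying continuity \eqref{eq:continuity} of $a_h$ yields $\|u-u_h\|_{H^s(\Omega)}^2 \lesssim \tn u - u_h\tn_h \, \tn \phi - \phi_\star^E\tn_h \lesssim \tn u - u_h \tn_h \, h^{l-s}\|u-u_h\|_{H^s(\Omega)}$, and cancelling one power of the error and invoking the best-approximation estimate \eqref{eq:abs-best-approx} from Theorem~\ref{thm:best_approx} gives $\|u-u_h\|_{H^s(\Omega)} \lesssim h^{l-s}\tn u - v\tn_h$ for all $v \in V_h^E$.

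The main obstacle I anticipate is the bookkeeping around the boundary terms in $a_h$ when one test slot is filled by the continuous dual solution $\phi$ rather than a discrete function: the abstract assumptions B1--B3 give continuity of $a_{\partial\Omega}$ and the inverse estimate B3 only on $V_h^E$, and $a_h$ itself is defined via $a_\Omega, a_{\partial\Omega}, b$ which need the extra regularity space $\widetilde V$. One must check that $\phi \in \widetilde V$ (which should follow from $\phi \in H^{2l-s}(\Omega) \cap V_{bc,0}$ with $2l-s \ge l$), so that $a_h(\cdot,\phi)$ and the consistency relation B5 are legitimately applicable, and that the boundary pieces $a_{\partial\Omega}(u-u_h,\phi)$ and $\beta b(u-u_h,\phi)$ combine correctly with $a_\Omega(u-u_h,\phi)$ to reproduce the exact error functional — this is where the precise structure of $l_h$ and the fact that $\phi$ has homogeneous boundary data (so no spurious boundary data appears) must be used carefully. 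A secondary technical point is that the duality pairing defining $\phi$ lives on $\Omega$ while $a_h$ and the norms live on $\Omega_h \supset \Omega$; one uses that $u, u_h$ restricted to $\Omega$ are what enters, and that the $H^s(\Omega)$ norm only sees $\Omega$, so no extension of $\phi$ beyond $\Omega$ is needed in the pairing, only in the interpolation step where Lemma~\ref{lem:interpol-full} already accounts for the extension.
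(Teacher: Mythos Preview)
Your proposal is correct and follows essentially the same Aubin--Nitsche duality argument as the paper. The only presentational difference is that the paper packages the step you call the ``main obstacle'' as \emph{adjoint consistency}: by symmetry of $a_h$ and assumption B5 applied to the dual solution $\phi$, one has directly $a_h(v,\phi)=\langle\psi,v\rangle_s$ for all $v\in\widetilde V+V_h^E$, so the identity $\langle u-u_h,\psi\rangle_s=a_h(u-u_h,\phi)$ holds without having to unpack the boundary terms by hand; the paper also keeps $\psi$ arbitrary and takes the supremum at the end rather than choosing $\psi$ via the error, but that is immaterial.
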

\begin{proof}
We will argue by duality and therefore let $\phi \in V_{bc,0}$ solve the dual problem
\begin{equation}
a_\Omega(v, \phi) = l_\psi(v) \qquad \forall v \in V_{bc,0}
\end{equation}
where $l_\psi \in V_{bc,0}^*$ takes the form
\begin{equation}
l_\psi(v) =\langle  \psi, v \rangle_s 
\end{equation}
where $\langle \cdot , \cdot \rangle_s: H^{-s}(\Omega) \times  H^{s}(\Omega) \rightarrow \IR$ is the duality pairing.

By symmetry of $a_h$ and consistency (\ref{eq:consistency}) in assumption B5 it follows that $\phi$ satisfies the 
adjoint consistency 
\begin{align}\label{eq:consistency-adjoint}
a_h(v,\phi ) = l_\psi(v) \qquad \forall v \in \widetilde{V} + V_h^E
\end{align}
Setting $v = u - u_h$ in (\ref{eq:consistency-adjoint})  we get 
\begin{align}
\langle u- u_h,\psi\rangle_s 
&=a_h(u - u_h, \phi ) 
\\
&=a_h(u - u_h, \phi - w ) 
\\
&\lesssim \tn u - u_h \tn_h \tn \phi - \phi_h \tn_h
\\
&\lesssim \tn u - u_h \tn_h h^{l-s} | \phi |_{H^{2l-s}(\Omega)}
\\
&\lesssim h^{l-s}  \tn u - u_h \tn_h \| \psi \|_{H^s(\Omega)}
\end{align}
where we used the consistency (\ref{eq:consistency}) to subtract $w \in V_h^E$, the continuity (\ref{eq:continuity}) of $a_h$, 
the approximation property (\ref{eq:abs_approx}), and the elliptic regularity (\ref{eq:abs_elliptic}). We therefore arrive at 
\begin{equation}
\| u - u_h \|_{H^s(\Omega)} = \sup_{\psi \in H^{-s}(\Omega)} \frac{\langle u- u_h,\psi\rangle_s}{\| \psi \|_{H^{-s}(\Omega)}} 
\lesssim  h^{l-s}  \tn u - u_h \tn_h 
\end{equation}
which completes the proof.

\end{proof}

\subsection{Time Dependent Problems}\label{sec:time}
The power of the abstract framework established above is that once
stability and optimal accuracy has been established for the
Ritz-projection associated to the (time constant coefficient) elliptic model problem
\eqref{eq:abstract-mod-prob} we can immediately extend the results to cut finite
element methods for the associated time dependent problems. To
illustrate this we will consider the abstract parabolic problem
subject to the elliptic operator $a$ of \eqref{eq:abstract-mod-prob}. An identical
argument can be developed for the second order hyperbolic problem, for
details on this we refer to \cite{BHL20b}. In this reference it is also
shown that the discrete extension makes it
possible to lump the mass matrix for explicit time-stepping.  For simplicity 
we consider only semi-discretization in space, however the arguments extend in a
straightforward way to the fully discrete case using any state of the
art time discretization for parabolic problems \cite{Thom06}. 

First we introduce the Ritz projection, $R_h: \widetilde{V} \mapsto V_h^E$, where we recall 
that $\widetilde{V}$ is $V$ with some more smoothness to guarantee that $a_h$ is defined on 
$\widetilde{V}$, defined by
\begin{equation}\label{eq:Ritz}
a_{h}(R_h v,w) = a_h(v,w) \qquad \forall w \in V_h^E
\end{equation}
Differentiating \eqref{eq:Ritz} in time we see that $\partial^i R_h v =
R_h \partial_t v$, since $R_h$ is independent of time.
Assuming that assumptions B1-B5 hold, it follows from Theorem \ref{thm:best_approx} and Theorem
\ref{eq:L2_abs} that for $i \in \{0,1\}$,
\begin{equation}
\|\partial_t^i  (v - R_h v)\|_\Omega + h^l \tn \partial_t^i (v - R_h v_h) \tn_h\lesssim h^l \tn \partial_t^i (v - w)\tn_h \qquad \forall w
\in V_h^E
\end{equation}

Let $I= (0,T)$ be a time interval and $Q:= \Omega \times I$ the space time domain and consider the problem, 
find $u \in V^{bc}_Q := L^2(0,T;V_{bc})$, $u(\cdot,0) =
u_0 \in \widetilde{V}_{bc}$ such that
\begin{equation}\label{eq:parabolic}
(\partial_t u, v)_Q+ a_Q(u,v) = l_Q(v) \qquad \forall  v \in V_Q^0
\end{equation}
where $V_Q^0 :=  L^2(0,T;V_{bc,0})$, 
\begin{equation}
(u,v)_Q = \int_0^T (\partial_t u, v)_\Omega 
\end{equation}
and
\begin{equation}
l_Q =  \int_0^T l(v) 
\end{equation}
with $l$ a given linear functional that may depend on time.
For all $l_Q \in V_Q'$ the problem \eqref{eq:parabolic} admits a
unique solution \cite[Theorem 4.1 and Remark 4.3]{LM72}. 

We propose the following CutFEM discretization of the problem
\eqref{eq:parabolic}. Find $u_h:[0,T] \rightarrow V_h^E$ such that for all
$t \in (0,T)$ there holds
\begin{equation}\label{eq:parabolic_FEM}
(\partial_t u_h, v)_\Omega+ a_h(u_h,v) = l(v)\qquad \forall v \in V_h^E
\end{equation}
The equation \eqref{eq:parabolic_FEM} can now be discretized in time,
for instance
by replacing $\partial_t$ with any suitable finite difference method
such as backward differentiation or Crank-Nicolson and evaluate $u_h$
at a suitable point in time in $a_h$. For the backward Euler method
the linear system associated to one time step takes the well-known form:
find $u_h^{n+1} \in V_h^E$ such that
\begin{equation}
\tau^{-1} (u_h^{n+1},v)_\Omega +a_h(u_h^{n+1},v) = l_{n+1}(v) +
\tau^{-1} (u_h^{n+1},v)_\Omega \qquad \forall v \in V_h^E
\end{equation}
We see that this linear system is stable indepently of the the
mesh/interface intersection thanks to the stability of the extended
space, see Lemma \ref{lem:stab_E}.

The following error estimate holds for the semi-discretized problem
\eqref{eq:parabolic_FEM}.
\begin{thm}
Let $u_h$ be the solution of \eqref{eq:parabolic_FEM} and $u$ the
solution of \eqref{eq:parabolic} then there holds
\begin{equation}
\sup_{t \in (0,T)} \|u(t)- u_h(t)\|_\Omega \lesssim \|u(0) -
u_h(0)\|_\Omega + \int_0^T h^{l}
\inf_{v_h \in V_h^E} \tn \partial_t v - v_h \tn_h 
\end{equation}
and 
\begin{align}
\int_0^T \tn u- u_h \tn^2_h   &\lesssim \|u(0) -
u_h(0)\|_\Omega^2+\int_0^T \inf_{v_h \in V_h^E} \tn v - v_h \tn^2_h 
\\
&\qquad 
+ \left(\int_0^T h^{l} \inf_{v_h \in V_h^E} \tn \partial_t v - v_h \tn_h \right)^2
\end{align}
\end{thm}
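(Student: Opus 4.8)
The plan is to follow the standard Wheeler--Thom\'ee energy argument for parabolic problems, decomposing the error through the Ritz projection $R_h$ defined in \eqref{eq:Ritz}. First I would write $u - u_h = (u - R_h u) + (R_h u - u_h) =: \rho + \theta$, where $\rho$ is controlled directly by the elliptic theory already established: by Theorem \ref{thm:best_approx} and Theorem \ref{eq:L2_abs} (applied with $s=0$) we have $\|\rho\|_\Omega \lesssim h^l \tn u - w \tn_h$ and $\tn \rho \tn_h \lesssim \tn u - w\tn_h$ for any $w \in V_h^E$, and the same bounds hold for $\partial_t \rho = \rho_t$ using the commutation $\partial_t R_h v = R_h \partial_t v$. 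The term $\theta \in V_h^E$ is the discrete component, and the task is to derive an equation for it.

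Next I would subtract the discrete scheme \eqref{eq:parabolic_FEM} from the semi-discrete consistency of the exact solution. Since $a_h$ applied to $u$ reproduces $l$ on $V_h^E$ (this is consistency B5 in the time-constant-coefficient form that underlies the Ritz projection), and since $a_h(R_h u, v) = a_h(u,v)$ by definition of $R_h$, the elliptic parts telescope and one obtains the error equation
\begin{equation}
(\partial_t \theta, v)_\Omega + a_h(\theta, v) = -(\partial_t \rho, v)_\Omega \qquad \forall v \in V_h^E .
\end{equation}
Here the key cancellation is that $a_h(\theta,v) = a_h(R_h u - u_h, v) = a_h(u,v) - l(v) + (\partial_t u_h, v)_\Omega$ combined with $(\partial_t u, v)_\Omega + a_h(u,v) = l(v)$, leaving only the $\rho_t$ term on the right. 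One should note here that strictly this requires the exact solution to satisfy the Nitsche-consistent identity with $a_h$ (i.e.\ $u$ solves \eqref{eq:parabolic_FEM} up to the consistency error absorbed into the definition of $a_h$ and $l$), which is the content of B5 transported to the parabolic setting.

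For the first estimate I would test with $v = \theta$, use coercivity \eqref{eq:coerciv} to get $\tfrac12 \tfrac{d}{dt}\|\theta\|_\Omega^2 + c\tn\theta\tn_h^2 \le \|\partial_t \rho\|_\Omega \|\theta\|_\Omega$, discard the $\tn\theta\tn_h^2$ term, cancel one power of $\|\theta\|_\Omega$, and integrate in time to obtain $\sup_t \|\theta(t)\|_\Omega \lesssim \|\theta(0)\|_\Omega + \int_0^T \|\partial_t \rho\|_\Omega$; then $\|\theta(0)\|_\Omega \le \|u(0)-u_h(0)\|_\Omega + \|\rho(0)\|_\Omega$ and the triangle inequality with the bound on $\|\rho_t\|_\Omega \lesssim h^l \inf_{v_h} \tn \partial_t v - v_h\tn_h$ finishes it. For the second estimate I would instead keep the coercivity term, integrate $\int_0^T \tn\theta\tn_h^2 \lesssim \|\theta(0)\|_\Omega^2 + \int_0^T \|\partial_t\rho\|_\Omega\|\theta\|_\Omega \lesssim \|\theta(0)\|_\Omega^2 + \left(\int_0^T \|\partial_t\rho\|_\Omega\right)\sup_t\|\theta\|_\Omega$, then feed in the first estimate's bound on $\sup_t\|\theta\|_\Omega$ and apply Young's inequality to split the cross term, producing the $\|\theta(0)\|_\Omega^2$, the $\int_0^T \tn\rho\tn_h^2$ (equivalently $\int_0^T \inf_{v_h}\tn v - v_h\tn_h^2$), and the squared $\left(\int_0^T h^l \inf_{v_h}\tn \partial_t v - v_h\tn_h\right)^2$ contributions claimed. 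The main obstacle is bookkeeping rather than depth: one must be careful that the parabolic error equation really closes with only $\rho_t$ on the right, which hinges on the time-independence of $a_h$ (so that $R_h$ commutes with $\partial_t$) and on the Nitsche consistency being stated for the exact solution; once that is in hand, the two estimates are routine Gr\"onwall-free energy arguments.
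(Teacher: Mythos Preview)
Your proposal is correct and follows essentially the same approach as the paper: decompose the error via the Ritz projection into $\rho = u - R_h u$ and $\theta = R_h u - u_h$, derive the error equation $(\partial_t\theta,v)_\Omega + a_h(\theta,v) = -(\partial_t\rho,v)_\Omega$ using consistency and the definition of $R_h$, then test with $\theta$ and use coercivity to obtain both the $L^\infty(L^2)$ and the $L^2(\tn\cdot\tn_h)$ bounds. The only cosmetic difference is that the paper organizes the energy step by first writing $\|e_h(s)\|_\Omega^2 + \alpha\int_0^s\tn e_h\tn_h^2$ and taking the supremum in $s$ before splitting, whereas you differentiate, cancel a factor of $\|\theta\|_\Omega$, and integrate; the two routes are equivalent.
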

\begin{proof}
The proof uses standard arguments for the parabolic problem together with
the CutFEM toolbox for elliptic problems developed above.
First we decompose the error as
\begin{equation}
u-u_h = \underbrace{u - R_h u}_{e_R} + \underbrace{R_h u - u_h}_{e_h}
\end{equation}
Since the estimate for $e_R$ is immediate using \eqref{eq:Ritz} we
only need to prove the bounds for the discrete error $e_h$. Using the
formulation \eqref{eq:parabolic_FEM} and the coercivity of the form
$a_h$, for $\beta$ sufficiently large, \eqref{eq:coerciv}, there exists a constant $\alpha>0$ 
such that for $s \in (0,T)$
\begin{equation}
\|e_h(s)\|_\Omega^2 + \alpha \int_0^s \tn e_h \tn_h^2 
\leq  \|e_h(0)\|_\Omega^2  + \int_0^s (\partial_t e_h, e_h)_\Omega  + \int_0^s a_h(e_h,e_h)
\end{equation}
For the right hand side we see that using \eqref{eq:parabolic} the following Galerkin orthogonality 
holds
\begin{equation}
\int_0^s (\partial_t (u - u_h), e_h)_\Omega + \int_0^s a_h(u - u_h,e_h)  = 0
\end{equation}
and by the definition of the Ritz projection $a_h(R_h u - u,e_h) =
0$ which imply
\begin{equation}
\|e_h(s)\|_\Omega^2+ \int_0^s a_h(e_h,e_h)   =  \|e_h(0)\|_\Omega^2-\int_0^s
(\partial_t e_R, e_h)_\Omega
\end{equation}
Taking the sup over $s \in (0,T)$ we then obtain
\begin{equation}
\sup_{s \in (0,T)} \|e_h(s)\|_\Omega^2 \leq \|e_h(0)\|_\Omega^2 +
\sup_{s \in (0,T)} \|e_h(s)\|_\Omega \int_{0}^T \| \partial_t e_R
\|_\Omega 
\end{equation}
and therefore 
\begin{equation}
\sup_{s \in (0,T)} \|e_h(s)\|_\Omega^2 \lesssim  \|e_h(0)\|^2_\Omega +
\Big( \int_{0}^T \| \partial_t e_R \|_\Omega \Big)^2
\end{equation}

Applying \eqref{eq:Ritz} with $i=1$ we see that
\begin{equation}
\sup_{s \in (0,T)} \|e_h(s)\|_\Omega \leq C \int_{0}^T h^{l}
\inf_{w \in V_h^E} \tn \partial_t (v - w) \tn_h 
\end{equation}
The triple norm bound follows by observing that
\begin{equation}
 \int_0^T a_h(e_h,e_h)  \leq \|e_h(0)\|_\Omega^2+\sup_{s \in (0,T)} \|e_h(s)\|_\Omega^2
 + \left(\int_{0}^T \| \partial_t e_R
\|_\Omega\right)^2
\end{equation}
\end{proof}

\section{Applications}\label{sec:appli}
To show the flexibility of the above framework we will below consider
some different partial differential equations that enter the
framework. In principle the arguments of the abstract framework can be
applied to elliptic operators of any order $2l$, $l=1,2,3...$. However
for the ske of conciseness we only discuss the cases up to $l=3$.

\subsection{Second Order Boundary Value Problems}

\paragraph{The Model Problem.}
Consider the second order boundary value problem
\begin{equation}\label{eq:poisson}
-\Delta u = f \qquad \text{in $\Omega$}, \qquad u = g \qquad \text{on $\partial\Omega$}
\end{equation}
For smooth boundary there is a unique solution to this problem and we have the elliptic regularity 
\begin{equation}
\| u \|_{H^{s+2}(\Omega)} \lesssim \| f \|_{H^s(\Omega)} + \| g \|_{H^{s+3/2}(\partial \Omega)}
\end{equation}
\paragraph{The Finite Element Method.}
The standard Nitsche method takes the form
\begin{align}\label{eq:sec-a}
a_h(u_h,v) = l_h(v)
\end{align}
where 
\begin{align}
a_h(v,w) &=(\nabla v, \nabla w)_\Omega - (\nabla_n v, w)_{\partial \Omega} - (\nabla_n w, v)_{\partial \Omega} 
+ \beta h^{-1} (v,w)_{\partial \Omega}
\\
l_h(v) &= (f,v)_{\Omega} - (g, \nabla_n v)_{\partial \Omega}  + \beta h^{-1} (g,v)_{\partial \Omega}
\end{align}
Setting 
\begin{align}
a_\Omega (v,w) &= (\nabla v, \nabla w)_\Omega 
\\
a_{\partial \Omega} (v,w) &= (\nabla_n v, w)_{\partial \Omega} 
\\
b(v,w) &= h^{-1} (v,w)_{\partial \Omega}
\end{align}
and
\begin{align}
\tn v \tn^2_\Omega &= \| \nabla v \|^2_\Omega
\\
\tn v \tn^2_{\partial \Omega} &= h \| \nabla_n v \|^2_{\partial \Omega}
\\
\| v \|^2_b &=  h^{-1}\| v \|^2_{\partial \Omega}
\end{align}
we translate the problem (\ref{eq:sec-a}) into the abstract framework and it remains to verify
assumptions B1-B5. Here {B1} and {B2} follows directly from the Cauchy-Schwarz inequality. In 
{B3} the key estimate (\ref{eq:abs-inverse})  takes the form 
\begin{equation}
h \| \nabla_n v \|^2_{\partial \Omega} \lesssim \| \nabla v \|^2_\Omega \qquad v \in V_h^E
\end{equation}
Using the inverse  inequality, see \cite{HWX17},  
\begin{equation}
h \| w \|^2_{T\cap \partial \Omega} \lesssim \| w \|^2_T  \qquad w \in \mathbb{P}_k(T)
\end{equation}
applied to $w = \nabla v$ we get 
\begin{align}
h \| \nabla_n v \|^2_{\partial \Omega} \lesssim h \| \nabla v \|^2_{\partial \Omega} 
\lesssim 
\| \nabla v \|^2_{\mcT_h (\partial \Omega)} 
\lesssim
\| \nabla v \|^2_{\Omega_h} 
\lesssim 
\| \nabla v \|^2_\Omega
\end{align}
Here we finally used the stability (\ref{eq:E-stability}) of the extension operator. To verify {B4} we use the 
Cauchy-Schwarz inequality, 
\begin{align}
l_h(v) =& (f,v)_\Omega- (g, \nabla_n v)_{\partial \Omega}  + \beta h^{-1} (g,v)_{\partial \Omega}
\\
&\leq  \| f \|_\Omega \|v\|_\Omega + h^{-1/2}\|g\|_{\partial \Omega}  h^{1/2}\|\nabla_n v\|_{\partial \Omega}  
+ \beta h^{-1/2} \|g\|_{\partial \Omega} h^{-1/2}\| v\|_{\partial \Omega}
\\
&\leq \max(1,\beta)  (\| f \|^2_\Omega  + h^{-1}\|g\|^2_{\partial \Omega} )^{1/2}  (     \|v\|^2_\Omega  +  h \|\nabla_n v\|^2_{\partial \Omega}  
+ h^{-1}\| v\|^2_{\partial \Omega} )^{1/2}
\\
&\lesssim  (\| f \|^2_\Omega  + h^{-1}\|g\|^2_{\partial \Omega} )^{1/2} \tn v \tn_h
\\
&\lesssim_{h^{-1/2}} \tn v \tn_h
\end{align}
which for fixed $h$ proves the desired continuity. Note that we only use the continuity of $l_h$ to conclude that there is a unique 
solution to the discrete problem by application of the Lax-Milgram lemma, and therefore we apply the stability for fixed mesh parameters. 
Finally, the consistency {B5} follows directly from an application of Green's formula.

\paragraph{Error Estimate.}
To turn the abstract error estimate (\ref{eq:abs-best-approx}) into a quantitative bound we use the interpolation theory 
for $V_h^E$ to show that 
\begin{equation}
\tn u - u_h \tn_h \lesssim 
\tn u - \pi_h u \tn_h \lesssim h^{k-1} \| u \|_{H^{k}(\Omega)} 
\end{equation}
which for instance holds $C^0$ Lagrange elements of order $k$.

\subsection{Second Order Interface Problems}

\paragraph{The Model Problem.}
Let $\Omega \subset  \IR^d$ be a polygonal domain.  Let  $\Omega_1 \subset \Omega \setminus U_\delta(\partial \Omega)$, where 
$U_\delta(\partial \Omega) = \{ x \in \IR^d \, |\, \text{dist}(x,\partial \Omega) < \delta\}$, be a subset 
with smooth boundary $\partial \Omega_1$, which also forms the interface $\Gamma$,  and let $\Omega_2 = \Omega \setminus \Omega_1$. 
Consider the interface problem 
\begin{alignat}{3}\label{eq:interface-a}
-\nabla \cdot A_i \nabla u_i &=f_i &\qquad &\text{in $\Omega_i$}
\\ \label{eq:interface-b}
[u_i] & = 0 &\qquad &\text{on $\Gamma$}
\\ \label{eq:interface-c}
[n \cdot A_i \nabla u_i]&=0 &\qquad &\text{on $\Gamma$}
\\\label{eq:interface-d}
u&=0 &\qquad &\text{on $\partial \Omega$}
\end{alignat}
where $A_i$ are constant positive definite matrices. Testing with $v \in H^1_0(\Omega)$ and integrating by parts and using the 
interface condition we obtain the weak form 
\begin{align}
\sum_{i=1}^2 (f_i,v)_{\Omega_i} &= \sum_{i=1}^2 -(\nabla \cdot A_i \nabla u_i,v)_{\Omega_i} 
\\
&\qquad =   \sum_{i=1}^2 (A_i \nabla u_i, \nabla v)_{\Omega_i} - ([n \cdot A_i \nabla u_i], v)_\Gamma
 =\sum_{i=1}^2 (A_i \nabla u_i, \nabla v)_{\Omega_i}
\end{align}
and we note that the form on the right hand side is coercive and continuous on $H^1_0(\Omega)$ and we can conclude using Lax-Milgram 
that there is an exact solution in $H^1_0(\Omega)$. 

\paragraph{The Finite Element Method.}

Let $V_{h_i}^E$ be finite element spaces on $\Omega_i$ that extends over $\Gamma$. For simplicity we assume that the 
homogeneous boundary conditions on the external boundary $\partial \Omega$ are strongly enforced in $V_{h,2}^E$ using a 
matching mesh at $\partial \Omega$. The finite element method takes the form: find $u_h = (u_{h,1},u_{h,2}) \in V_{h,1}^E \oplus V_{h,2}^E = V_h^E$, such that 
\begin{equation}\label{eq:interface-method}
a_h(u_h, v) = l_h(v) \qquad v \in V_h^E
\end{equation}
where the forms are 
\begin{align}
a_h(v,w) &= \sum_{i=1}^2 (A_i \nabla v, \nabla w)_{\Omega_i}  -  (n_i \cdot A_i \nabla v_i, w_i - \langle w \rangle)_{\partial \Omega_i }
\\
&\qquad
- (n_i \cdot A_i \nabla w_i, v_i - \langle v \rangle)_{\partial \Omega_i }
+ \beta h^{-1} \| n_i \|_{A_i} ( v_i - \langle v \rangle , w - \langle w_i \rangle )_{\partial \Omega_i}
\\
l_h(v) &= \sum_{i=1}^2 (f_i, v_i)_\Omega
\end{align}
with $\| n_i \|^2_{A_i} = n_i \cdot A_i \cdot n_i$ and $\langle \cdot \rangle$ is an convex combination average at 
the interface $\Gamma$ defined by 
\begin{equation}\label{eq:weights}
\langle \cdot \rangle : V_h^E \ni (v_1,v_2) \mapsto \sum_{i=1}^2 \kappa_i v_i \in \sum_{i=1}^2  (V_{h,i}^E)|_\Gamma 
\end{equation}
with $\kappa_i>0$ and $\kappa_1 + \kappa_2 = 1$.
\paragraph{The Abstract Setting.}
The method is transferred into the abstract framework by working in the finite element space $V_h^E = V_{h,1}^E \oplus V_{h,2}^E$ 
and defining the forms 
\begin{align}
a_\Omega (v,w) &= \sum_{i=1}^2 (A_i \nabla v, \nabla w)_{\Omega_i} 
\\
a_{\partial \Omega} (v,w) &= \sum_{i=1}^2 (n_i \cdot A_i \nabla v_i, w_i - \langle w \rangle)_{\partial \Omega_i }
\\
b(v,w) &= \sum_{i=1}^2  \beta_i h^{-1} \| n_i \|_{A_i} ( v_i - \langle v \rangle , w - \langle w_i \rangle )_{\partial \Omega_i}
\end{align}
and norms
\begin{align}
\tn v \tn^2_\Omega &=  \sum_{i=1}^2 (A_i \nabla v_i, \nabla v_i)_{\Omega_i}
\\
\tn v \tn^2_{\partial \Omega} &=  \sum_{i=1}^2 h \| n_i \|_{A_i}^{-1} \| n_i \cdot A_i \nabla v_i \|^2_{\partial \Omega_i}
\\
\| v \|^2_b &=  \sum_{i=1}^2  \beta_i h^{-1} \| n_i \|_{A_i} \|v_i - \langle v \rangle\|^2_{\partial \Omega_i}
\end{align}
Next we verify the assumptions. 

\begin{rem} Our formulation of the finite element method is equivalent to standard Nitsche formulations for the 
interface problem but it has a simpler structure only involving the average of the solution at the interface avoiding 
introduction of the average of the flux and jump which are quantities
with signs depending on the order of the subdomains. This also
connects in a natural way to hybridised methods simply by replacing
the average $\langle v \rangle$ by a trace variable.
Note that we get two subdomain Nitsche formulations where the Dirichlet data is precisely the average $\langle u_h \rangle$ 
which leads to a simple decoupled structure. To verify that the method is indeed equivalent to a standard Nitsche formulation 
we observe that $v_1 - \langle v \rangle = (1- \kappa_1) v_1 -  \kappa_2 v_2 = \kappa_2 (v_1 - v_2 )$ and similarly 
$v_2 - \langle v \rangle = \kappa_1 (v_2 - v_1 )$, which gives the identity 
\begin{align}
&\sum_{i=1}^2  (n_i \cdot A_i \nabla v_i, w_i - \langle w \rangle)_{\partial \Omega_i }
\\
&\qquad =
\kappa_1^* (n_1 \cdot A_1 \nabla v_1, w_1 - w_2 )_{\partial \Omega_i }
+
\kappa_2^* (n_2 \cdot A_2 \nabla v_2, w_2 - w_1 )_{\partial \Omega_i }
\\
&\qquad =
\kappa_1^* (n_1 \cdot A_1 \nabla v_1, w_1 - w_2 )_{\partial \Omega_i }
+
\kappa_2^* (n_1 \cdot A_2 \nabla v_2, w_1 - w_2 )_{\partial \Omega_i }
\\
&\qquad =
\langle n \cdot A \nabla v \rangle_*, [w] )_{\Gamma }
\end{align}  
where $\kappa_i^* = 1 - \kappa_i$ are the dual weights and we defined the average of the flux
\begin{align}
\langle n \cdot A \nabla v \rangle_* =\kappa_1^* n_1 \cdot A_1 \nabla v_1 + \kappa_2^* n_1 \cdot A_2 \nabla v_2
\end{align} 
and the jump 
\begin{align}
[v] = v_1 - v_2
\end{align}
In a similar way we have
\begin{align}
b(v,w) &= \sum_{i=1}^2  \beta_i h^{-1} \| n_i \|_{A_i} ( v_i - \langle v \rangle , w - \langle w_i \rangle )_{\partial \Omega_i}
\\
&= \left( \sum_{i=1}^2  \beta_i h^{-1} \| n_i \|_{A_i} (\kappa_i^*)^2 \right) ( [ v ] , [w] )_{\partial \Omega_i}
\end{align}
Thus our formulation is indeed equivalent to a standard Nitsche formulation. 
\end{rem}

\paragraph{Verification of Assumptions.} {B1} follows directly from the fact that matrices $A_1$ and $A_2$ are constant and positive 
definite. For {B2} we note that using the Cauchy-Schwarz inequality we directly obtain the estimate 
\begin{align}
a_{\partial \Omega} (v,w) &= \sum_{i=1}^2 (n_i \cdot A_i \nabla v_i, w_i - \langle w \rangle)_{\partial \Omega_i } 
\\
&\leq  \sum_{i=1}^2 h^{1/2} \| n_i \|_{A_i}^{-1/2} \|n_i \cdot A_i \nabla v_i\|_{\partial \Omega_i}   h^{-1/2} \| n_i \|_{A_i}^{1/2}  \|w_i - \langle w \rangle\|_{\partial \Omega_i } 
\\
&\leq  \left( \sum_{i=1}^2 h \| n_i \|_{A_i}^{-1} \|n_i \cdot A_i \nabla v_i\|^2_{\partial \Omega_i}  \right)^{1/2} 
\left( \sum_{i=1}^2 h^{-1} \| n_i \|_{A_i}  \|w_i - \langle w \rangle\|^2_{\partial \Omega_i } \right)^{1/2}
\\
&\leq \tn v \tn_{\partial \Omega} \| w \|_b
\end{align}
For {B3} we proceed with standard estimates, use the fact that $\| n_i \|_{A_i} = \| n_i\|^2_{A_i}$,  followed by an inverse 
inequality to pass from the boundary to the set of elements intersecting the boundary 
\begin{align}
&\tn v \tn^2_{\partial \Omega} = \sum_{i=1}^2 h \| n_i \|_{A_i}^{-1} \|n_i \cdot A_i \nabla v_i\|^2_{\partial \Omega_i}
\leq \sum_{i=1}^2 h \| n_i \|_{A_i}^{-1} \| n_i \|^2_{A_i,\partial \Omega_i} \| \nabla v_i \|^2_{A_i, \partial \Omega_i}
\\
& \leq \sum_{i=1}^2 h  \| \nabla v_i \|^2_{A_i, \partial \Omega_i}
\leq \sum_{i=1}^2  \| \nabla v_i \|^2_{A_i,\mcT_h{ \Omega_{h,i} }}
\leq \sum_{i=1}^2 \| \nabla v_i \|^2_{A_i, \Omega_{h,i}}
\leq \sum_{i=1}^2 \| \nabla v_i \|^2_{A_i, \Omega_{i}}
\end{align}
where we finally used the stability of the extended finite element space $V_h^E$ to pass from $\Omega_{h,i}$ to $\Omega_i$. For {B4} we 
need the Poincar\'e inequality 
\begin{equation}\label{eq:interface-poincare}
\sum_{i=1}^2 \| v_i \|^2_{\Omega_i} \lesssim \tn v \tn^2_{h} 
\end{equation}
To prove the Poincar\'e inequality we let $\phi \in H^1(\Omega)$ be the solution to (\ref{eq:interface-a})-(\ref{eq:interface-d}) 
with $f_i = v_i$. We then have 
\begin{align}
&\sum_{i=1}^2 \| v_i \|^2_{\Omega_i} 
= 
\sum_{i=1}^2 (v_i,-\nabla \cdot A_i \nabla \phi)_{\Omega_i}
= 
 \sum_{i=1}^2 (\nabla v_i, A_i \nabla \phi)_{\Omega_i} - (v_i - \langle v \rangle , n_i \cdot A_i \nabla \phi)_{\Gamma}
 \\
&= a_\Omega(v,\phi) - a_{\partial \Omega}(\phi,v) 
\lesssim 
\tn v \tn_\Omega \tn \phi \tn_\Omega + \tn \phi \tn_{\partial \Omega} \| v \|_b
\lesssim 
\tn v \tn_h ( \tn \phi \tn_\Omega^2 + \tn \phi \tn^2_{\partial \Omega} )^{1/2}
\end{align}
We close the argument by using a trace inequality 
\begin{equation}
 \tn \phi \tn_{\partial \Omega} \lesssim \sum_{i=1}^2 \| \phi \|_{H^2(\Omega_i)}
\end{equation} 
followed by elliptic regularity 
\begin{equation}
\sum_{i=1}^2 \| \phi_i \|^2_{H^2(\Omega_i)} \lesssim\sum_{i=1}^2 \| v_i \|^2_{\Omega_i}
\end{equation}
to conclude that (\ref{eq:interface-poincare}) holds. Finally, {B5} follows by inserting the exact solution into (\ref{eq:interface-method}) 
and using integration by parts. 

\paragraph{Error Estimate.} Finally, using the interpolation theory 
for $V_h^E$ combined with the abstract error estimate (\ref{eq:abs-best-approx}) we get
\begin{equation}
\tn u - u_h \tn_h \lesssim 
\tn u - \pi_h u \tn_h \lesssim h^k \| u \|_{H^k(\Omega)} 
\end{equation}
which for instance holds $C^0$ Lagrange elements of order $k$.

\subsection{Fourth Order Boundary Value Problem}
\paragraph{The Model Problem.}
Let $\Omega \subset  \IR^d$ be a domain with smooth boundary $\partial \Omega$.  Consider the biharmonic  problem 
\begin{alignat}{3}\label{eq:biha-a}
\Delta^2 u &=f &\qquad &\text{in $\Omega$}
\\ \label{eq:biha-b}
u=\nabla_n u & = 0 &\qquad &\text{on $\partial \Omega$}
\end{alignat}
Testing with $v \in H^2(\Omega)$ and integrating by parts we obtain the weak form 
\begin{align}
(f,v)_\Omega &= (\Delta^2 u, v)_\Omega = - (\nabla \Delta u, \nabla v)_{\partial \Omega} + (\nabla_n \Delta u, v)_{\partial \Omega} 
\\
&\qquad = (\Delta u, \Delta v)_{\partial \Omega} - (\Delta u, \nabla_n v)_{\partial \Omega} + (\nabla_n \Delta u, v)_{\partial \Omega} 
\end{align}
With $V= \{v \in H^2(\Omega) : \text{$v = \nabla_n v = 0$ on $\partial \Omega$} \}$ we get the weak statement: find $u \in V$ such that 
\begin{equation}\label{eq:biha-weak}
a(u,v) = l(v) \qquad \forall v \in V
\end{equation} 
where 
\begin{equation}
a(u,v) = (\Delta u, \Delta v)_\Omega, \qquad l(v) = (f,v)_\Omega
\end{equation}
We also note that for $v\in V$ we have 
\begin{equation}\label{eq:biha-H2-control}
\| v \|_{H^2(\Omega)} \lesssim \| \Delta v \|_\Omega
\end{equation}
To prove (\ref{eq:biha-H2-control}) we first use partial integration 
\begin{align}
(\Delta v, \Delta v)_\Omega &= - (\nabla v, \nabla \Delta v)_\Omega + \underbrace{(\nabla_n v, \Delta v)_{\partial \Omega}}_{=0}
= - (\nabla v, (\nabla^2 v)\cdot \nabla)_\Omega 
\\
&\qquad = (\nabla^2 v, \nabla^2 v)_\Omega  - \underbrace{(\nabla v, (\nabla^2 v)\cdot n)_{\partial \Omega}}_{=0}
= (\nabla^2 v, \nabla^2 v)_\Omega
\end{align}
since for $v \in V$ we have that the full gradient $\nabla v = 0$ on $\partial \Omega$. This fact follows by observing that the boundary 
$\partial \Omega$ is the zero levelset of $u$ and that the gradient is orthogonal to the levelsets of $u$. Therefore the tangential part of the 
gradient at the boundary is zero.
%
%
%
Then using a duality argument, similar to the verification of 
(\ref{eq:interface-poincare}), we can show that we have the Poincar\'e inequality 
\begin{equation}
\| v \|_\Omega \lesssim \|\Delta v \|_\Omega
\end{equation}
and finally we have 
\begin{equation}
\| \nabla v \|^2_\Omega = (\nabla v, \nabla v)_\Omega = -(v,\Delta v)_\Omega \leq \frac12 \| v \|^2_\Omega + \frac12 \| \Delta v \|^2_\Omega 
\end{equation}
This completes the verification of (\ref{eq:biha-H2-control}).

We finally conclude using Lax-Milgram that there is an exact solution $u\in V$ to (\ref{eq:biha-weak}).

\paragraph{The Finite Element Method.}

The finite element method takes the form: find $u_h \in V_h^E \subset H^2(\Omega)$, such that 
\begin{equation}\label{eq:biha-method}
a_h(u_h, v) = l_h(v) \qquad v \in V_h^E
\end{equation}
where the forms are 
\begin{align}
a_h(v,w) &= (\Delta v, \Delta w)_\Omega + (\Delta v, \nabla_n w)_{\partial \Omega} - (\nabla_n \Delta v,w)_{\partial \Omega}
\\
&\qquad + (\Delta w, \nabla_n v)_{\partial \Omega} - (\nabla_n \Delta w ,v)_{\partial \Omega} 
\\
&\qquad + \beta ( h^{-1}(  (\nabla_n v, \nabla_n w)_{\partial \Omega}  + \gamma (v,w)_{\partial \Omega} ) 
\\
l_h(v) &= (f,v)_{\partial \Omega}
\end{align}
 with $\beta$ and $\gamma$ positive parameters.

\paragraph{The Abstract Setting.} Let $V_h^E\subset H^2(\Omega)$ be an extended finite element space and define
\begin{align}
a_\Omega (v,w) &= (\Delta v, \Delta w)_{\Omega} 
\\
a_{\partial \Omega} (v,w) &= (\Delta v, \nabla_n w)_{\partial \Omega} - (\nabla_n \Delta v, w)_{\partial \Omega} 
\\
b(v,w) &=h^{-1} (\nabla_n v, \nabla_n w)_{\partial \Omega}  + \gamma h^{-3} (v,w)_{\partial \Omega}
\end{align}
and norms
\begin{align}
\tn v \tn^2_\Omega &= \| \Delta v \|^2_{\Omega}
\\
\tn v \tn^2_{\partial \Omega} &= h \|\Delta v\|^2_{\partial \Omega} + h^3\|\nabla_n \Delta v\|^2_{\partial \Omega} 
\\
\| v \|^2_b &=  h^{-1} \|\nabla_n v\|^2_{\partial \Omega}  + \gamma h^{-3} \|v\|^2_{\partial \Omega}
\end{align}
Next we verify the assumptions. 

\paragraph{Verification of Assumptions.} {B1} is trivial.  {B2} follows directly from the Cauchy-Schwarz inequality 
\begin{align}
a_{\partial \Omega} (v,w)  &= (\Delta v, \nabla_n w)_{\partial \Omega} - (\nabla_n \Delta v,w)_{\partial \Omega} 
\\
&\leq \|\Delta v\|_{\partial \Omega} \|\nabla_n w\|_{\partial \Omega} +  \|\nabla_n \Delta v\|_{\partial \Omega} \|w\|_{\partial \Omega} 
\\
&\leq ( h \|\Delta v\|^2_{\partial \Omega} +   h^3 \|\nabla_n \Delta v \|^2_{\partial \Omega} )^{1/2} 
(  h^{-1} \|\nabla_n w\|^2_{\partial \Omega} +  h^{-3} \|w\|^2_{\partial \Omega} )^{1/2}
\\
&\leq \tn v \tn_{\partial \Omega} \| w \|_b
\end{align}
{B3.} Using an inverse estimate to pass from $\partial \Omega$ to $\mcT_h(\partial \Omega)$, and an inverse inequality to 
remove $\nabla_n$ in the second term, and finally the stability (\ref{eq:E-stability}) of the extended finite element space we get
\begin{align}
\tn v \tn^2_{\partial \Omega} &= h \|\Delta v\|^2_{\partial \Omega} + h^3\|\nabla_n \Delta v\|^2_{\partial \Omega} 
\lesssim \|\Delta v\|^2_{\mcT_h(\partial \Omega)} + h^2\|\nabla_n \Delta v\|^2_{\mcT_h(\partial \Omega)}
\\
&\qquad  
\lesssim \|\Delta v\|^2_{\mcT_h(\partial \Omega)} +\|\Delta v\|^2_{\mcT_h(\partial \Omega)}
\lesssim \| \Delta v \|^2_{\Omega_h} 
\lesssim  \| \Delta v \|^2_{\Omega} 
=\tn v \tn_\Omega^2
\end{align}
{ B4.} Follows from a Poincar\'e inequality which we may derive using a duality argument. {B5.} Follows by inserting the exact solution 
into the method (\ref{eq:biha-method}) and using partial integration twice.

\paragraph{Error Estimate.}
Again using the interpolation theory 
for $V_h^E$ combined with the abstract error estimate (\ref{eq:abs-best-approx}) we get
\begin{equation}
\tn u - u_h \tn_h \lesssim 
\tn u - \pi_h u \tn_h \lesssim h^{k-2} \| u \|_{H^k(\Omega)} 
\end{equation}
which holds for $C^1$ elements of order $k$ such as tensor product hermite splines of order $k=3$ or the Argyris element of order $k=5$ 
on triangles in two dimensions.

\bigskip
\section{Numerical Examples}\label{sec:num}

In the numerical examples below, we use the following implementation of the extension operator. The mapping $S_h$ is constructed by 
associating with each element $T\in  \mcT_{h,B}$ the element in $\mcT_{h,I}$ which minimizes the distance 
between the element centroids. For each $x \in \mcX_h \setminus \mcX_{h,I}$ the weights in the nodal average 
$\langle \cdot \rangle_x$, see (\ref{eq:average-nodal}), is taken to be $1$ on precisely one element $T_x \in \mcT_h(x)$ and 
zero on all elements in $\mcT_h(x) \setminus T_x$, where we recall that $\mcT_h(x)$ is the set of elements which has $x$ as a vertex. Note that this choice of weights corresponds to simply 
defining the nodal value in $x \in \mcX_h \setminus \mcX_{h,I}$ by $((F_h v)|_{T_x})|_x$, where 
$F_h$ is defined in (\ref{eq:Fh}). This particular implementation has the advantage that it introduces relatively few non zero elements in the mass and stiffness matrix. 

In the examples below, the meshsize is defined by $h=1/\sqrt{\text{NNO}}$, where NNO denotes the number of corner nodes for the geometrical elements in the active mesh.

\subsection{Higher Order Approximation of a Poisson Boundary Value Problem}

On the disc $\Omega = \{ r: \; r < 0.5\}$, $r=\sqrt{x^2 +y^2}$, we consider a problem with manufactured solution
\begin{equation}
u =\cos{(\pi r)}
\end{equation}
corresponding to the right hand side
\begin{equation}
f=(\pi (\sin(\pi r) + \pi r\cos(\pi r))/r
\end{equation}
With this right hand side and $u=0$ on $\partial\Omega$, we solve (\ref{eq:poisson}) using triangular $P^2$ elements using linearly cut elements with boundary value correction \cite{BuHaLa18}. The Nitsche parameter was set to $\beta =10^{2}$. 

In Figure \ref{fig:poisson} we show the solution on a mesh in a sequence of halving the meshsize, and in Figure \ref{fig:convpoisson}
we show the observed convergence in $L_2(\Omega)$ and in $H^1(\Omega)$. The expected convergence of $O(h^3)$ is attained in $L_2$ and $O(h^2)$ in $H^1$.

\subsection{The Biharmonic Problem}

In this example we consider higher regularity tensor product Hermite splines to construct conforming approximations 
of the biharmonic and  the triharmonic problem. Nitsche's method was
used in the context of embedded boundaries and $C^1$-splines in
\cite{HS12}, but without treating the potential stability issues on
the cut boundary. Starting with the biharmonic problem we use $C^1$ tensor product Hermite 
splines as our conforming finite element space. We approximate the boundary by cubic $C^1$ splines and the cut geometry, 
which is used for quadrature, is then given by isoparametrically mapped $P^3$ triangles; more details can be found in \cite{BHL20a}.

The domain is here given by the disc \[\Omega = \{ r: \; r < r_0\},\quad \text{where $r=\sqrt{(x-1/2)^2 +(y-1/2)^2}$, $r_0=1/2$}\]
We use the manufactured solution 
$u=10^3 (r_0^2-r^2)^2/64$ corresponding to the right hand side $f=10^3$.
The boundary conditions are $u=0$ and $\nabla_n u=0$ on $\partial\Omega$ and we chose $\beta=100$, $\gamma=1$ in (\ref{eq:biha-method}).

In Figure \ref{fig:biharmonic} we show the solution on a mesh in a sequence of halving the meshsize, and in Figure \ref{fig:convbiharmonic}
we show the observed convergence in $L_2(\Omega)$ and in $H^1(\Omega)$. The expected convergence of $O(h^4)$ is attained in $L_2$, $O(h^3)$ in $H^1$, and $O(h^2)$ in $H^2$.


\subsection{A Poisson Interface Problem}

Here we use $P^1$ elements for an interface problem of the type (\ref{eq:interface-a})--(\ref{eq:interface-d}), but with boundary data given by the exact solution.
The domain inside the interface is 
\[\Omega_1 = \{ r: \; r < r_0\},\quad \text{where $r=\sqrt{(x-1/2)^2 +(y-1/2)^2}$, $r_0=1/4$.}\]
and the outer domain is $\Omega_2 = (0,1)\times(0,1)\setminus \bar{\Omega}_1$. We choose $A_1=5 I$ and $A_2= 2 I$, where $I$ is the identity matrix.
We use a fabricated solution
\begin{equation}
u=\left\{\begin{array}{>{\displaystyle}l}-(r^2/2-r_0^2/2+r_0^2/5) \quad \text{if $r>r0$}\\[4mm]
-r^2/5 \quad \text{if $r<r0$}  \end{array}\right.
\end{equation}
corresponding to a right hand side $f=4$. The Nitsche parameter was set to $\beta =10$ and the averaging weights in (\ref{eq:weights}) were set following \cite{HH02} (using area weighting).

In Figure \ref{fig:interface} we show the solution on a mesh in a sequence of halving the meshsize, and in Figure \ref{fig:convinterface}
we show the observed convergence in $L_2(\Omega)$ and in $H^1(\Omega)$. The expected convergence of $O(h^2)$ is attained in $L_2$ and $O(h)$ in $H^1$.

\subsection{Higher Order PDE}

We can easily extend the method to the triharmonic problem
\begin{equation}
- \Delta^m u = f \quad {\text{in $\Omega$}}
\end{equation}
Consider the case $m=3$, then we get 
\begin{align}
-(f,v)_\Omega &= (\Delta^3 u, v )_\Omega 
\\
&=   (\nabla_n \Delta^{2} u, v)_{\partial \Omega} -  (\nabla \Delta^{2}u, \nabla v ) _\Omega
\\
&=  (\nabla_n \Delta^{2} u, v)_{\partial \Omega} - ( \Delta^{2}u, \nabla_n v )_{\partial \Omega} + (\Delta^2 u, \Delta v )_\Omega
\\
&=  (\nabla_n \Delta^{2} u, v)_{\partial \Omega} - ( \Delta^{2}u, \nabla_n v ) _{\partial \Omega} 
\\
&\qquad + (\nabla_n \Delta u, \Delta v )_{\partial \Omega} - (\nabla \Delta  u, \nabla \Delta v )_\Omega
\end{align}
and we note that the strong conditions manufactured by the partial integration in this case are 
\begin{equation}
u = \nabla_n u = \Delta u = 0 \qquad \text{on $\partial \Omega$}
\end{equation}
The above partial integration formula can then directly be used to construct a Nitsche formulation that requires $V_h \subset H^3(\Omega)$, which 
means that the finite element space must be $C^2$. The Hermite splines are only available for odd polynomial order $p$ with reqularity $C^{p-2}$ 
and  therefore we use $p=5$, which are $C^3$, for the triharmonic  problem.

We consider a problem with constructed solution $u=c x^3y^3(x - 1)^3(y - 1)^3$ with $c=10^4$. We then construct the corresponding right-hand side for Poisson's problem, the biharmonic problem, and the triharmonic problem. We solve the problem on the domain $(0,1)\times (0,1)$ on a mesh covering a slightly larger domain $(-0.21,1.1)\times (-0.31,1.1)$. In all cases we set $\beta=10^3$. In Fig. \ref{fig:mesh} we show a mesh with the domain boundary indicated. In Fig. \ref{fig:3elev} we show elevations of the computed solutions on the same mesh, and in Fig. \ref{fig:convPBT} we show the energy convergence (convergence in $a_\Omega(u,u)$) for the three different problems.

\paragraph{Acknowledgements.}
This research was supported in part by the Swedish Research
Council Grants Nos.\  2013-4708, 2017-03911, 2018-05262, and the Swedish
Research Programme Essence. EB was supported in part by the EPSRC grant EP/P01576X/1.

\bigskip
\bigskip
\noindent
\footnotesize {\bf Authors' addresses:}

\smallskip
\noindent
Erik Burman,  \quad \hfill \addressuclshort\\
{\tt e.burman@ucl.ac.uk}

\smallskip
\noindent
Peter Hansbo,  \quad \hfill \addressjushort\\
{\tt peter.hansbo@ju.se}

\smallskip
\noindent
Mats G. Larson,  \quad \hfill \addressumushort\\
{\tt mats.larson@umu.se}

\bibliographystyle{abbrv}
\footnotesize{
\bibliography{ref}
}

\newpage

\begin{figure}[ht]
	\begin{center}
		\includegraphics[scale=0.20]{./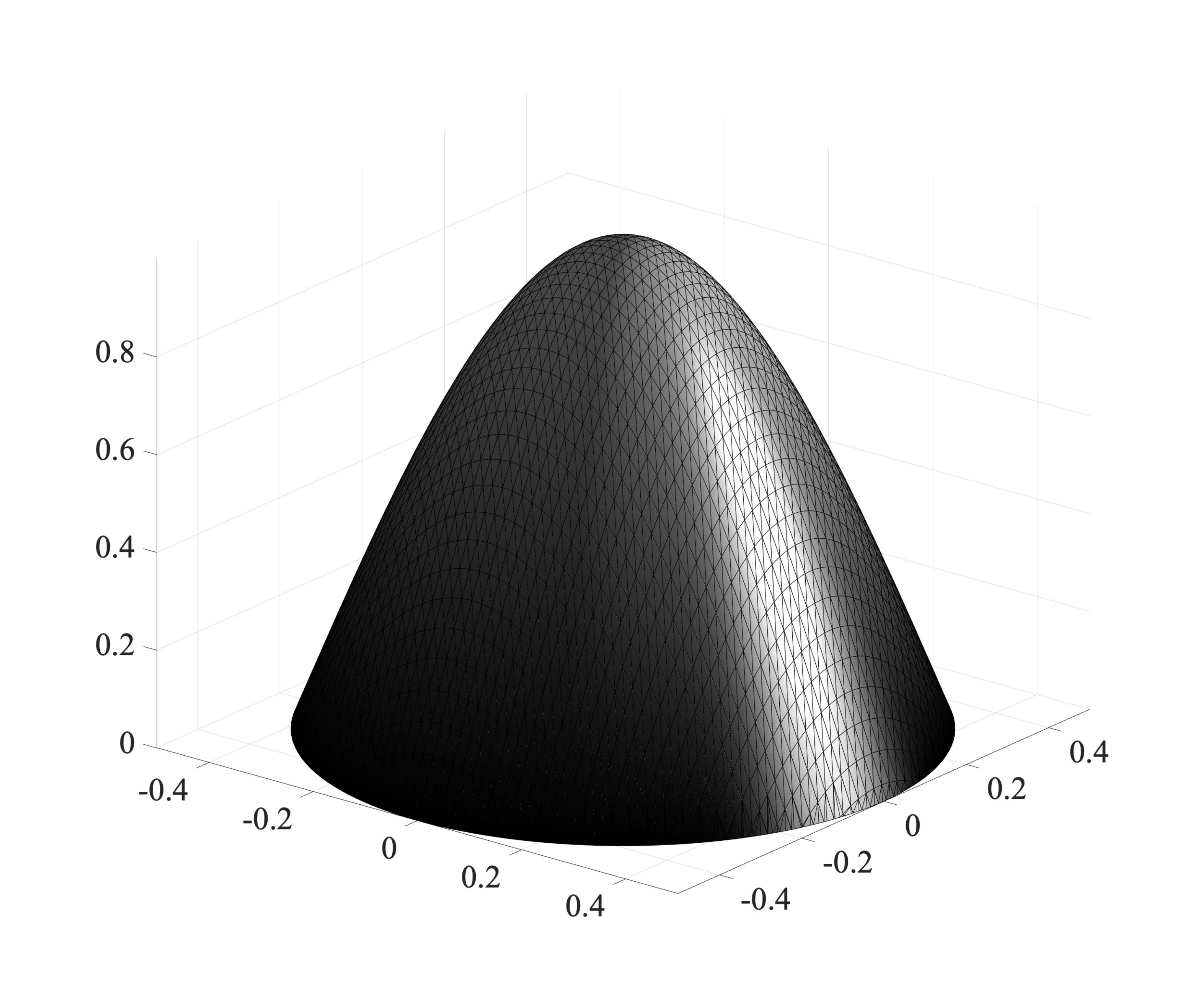}
	\end{center}
	\caption{Elevation of the computed Poisson solution on a particular mesh.}
\label{fig:poisson}
\end{figure}

\begin{figure}[ht]
	\begin{center}
		\includegraphics[scale=0.25]{./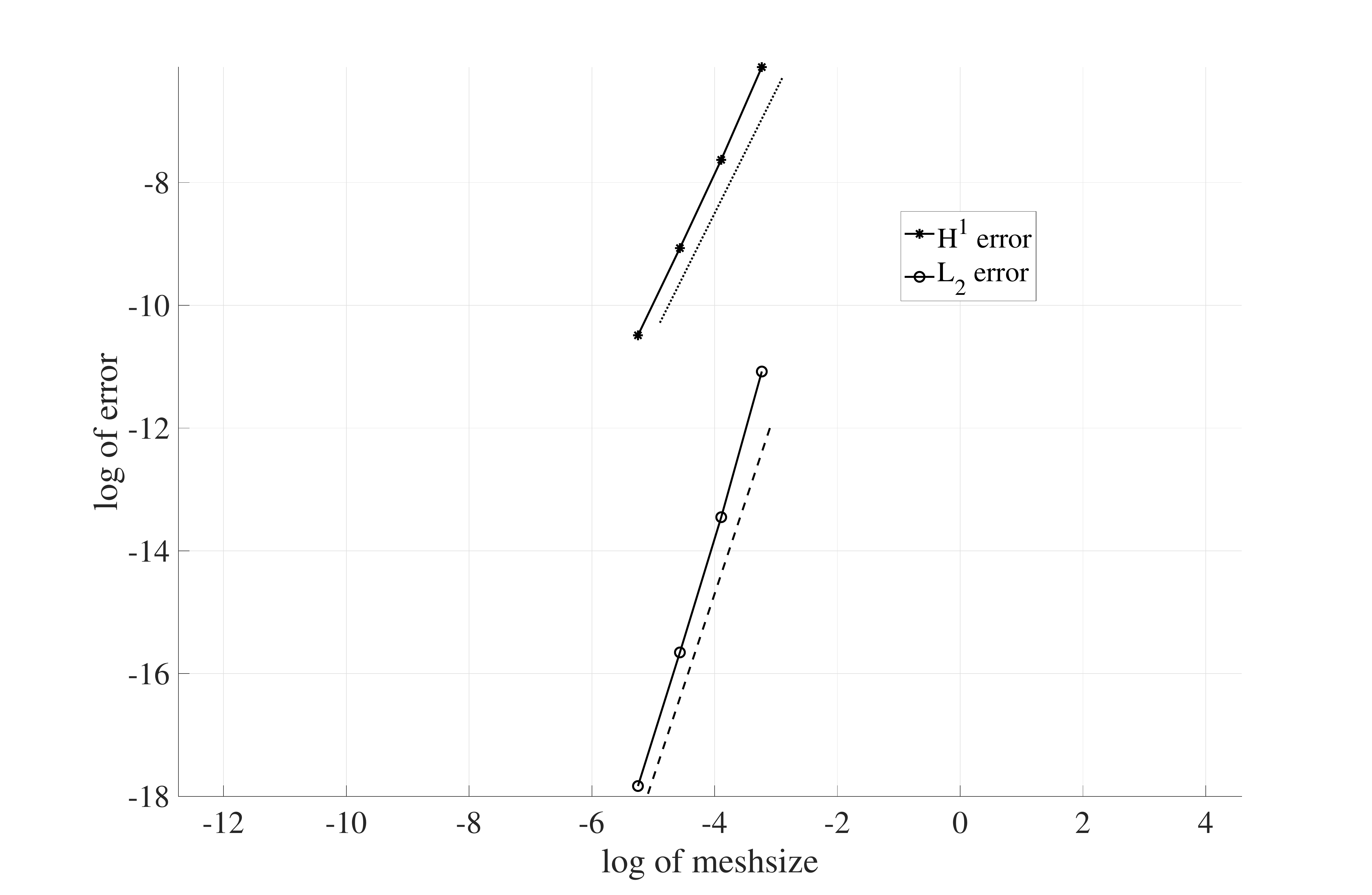}
	\end{center}
	\caption{Convergence for the Poisson problem. Dotted line has inclination $2:1$, dashed line has inclination $3:1$.}
	\label{fig:convpoisson}
\end{figure}

\begin{figure}[ht]
	\begin{center}
		\includegraphics[scale=0.20]{./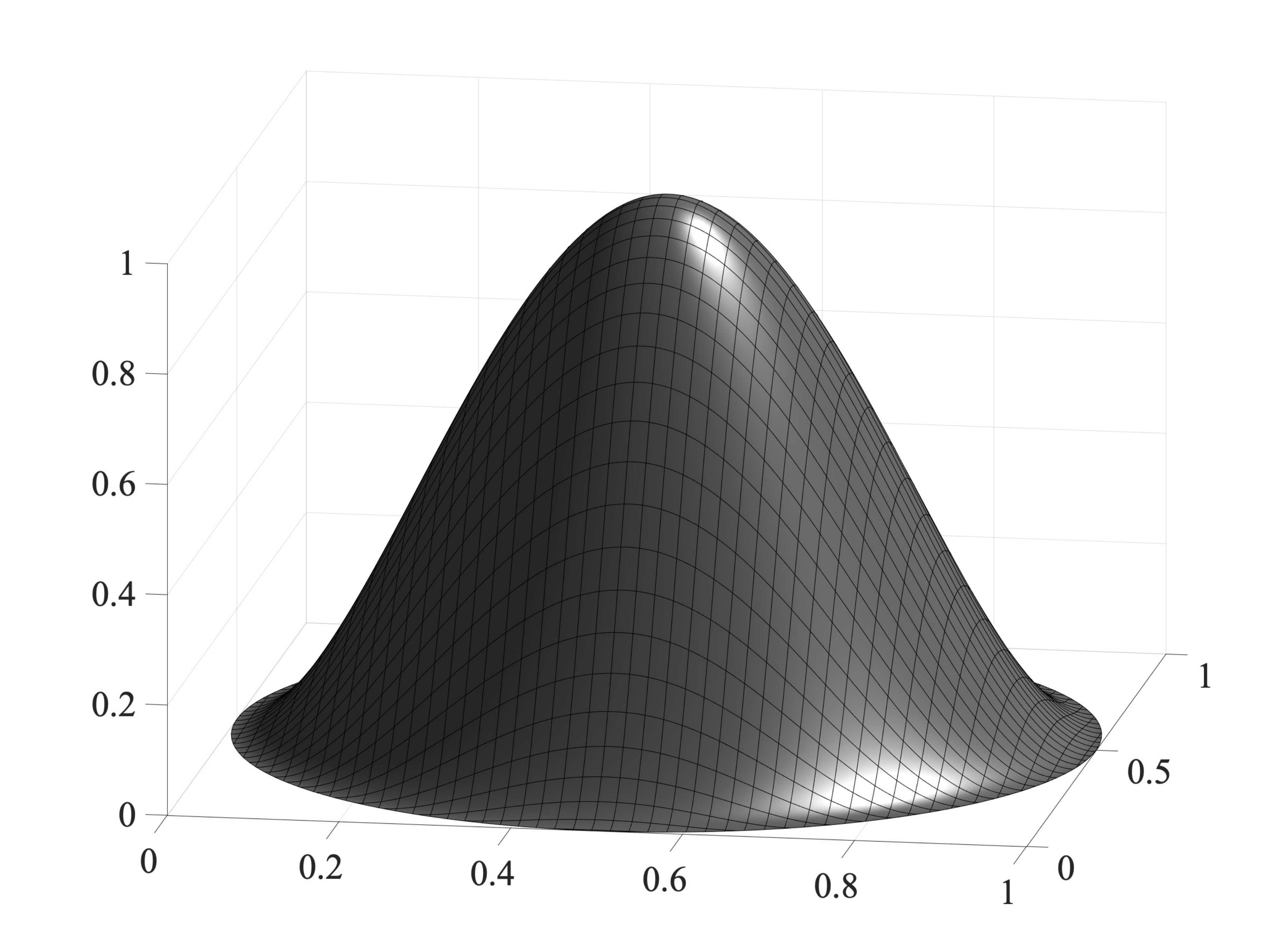}
	\end{center}
	\caption{Elevation of the computed biharmonic solution on a particular mesh.}
\label{fig:biharmonic}
\end{figure}

\begin{figure}[ht]
	\begin{center}
		\includegraphics[scale=0.30]{./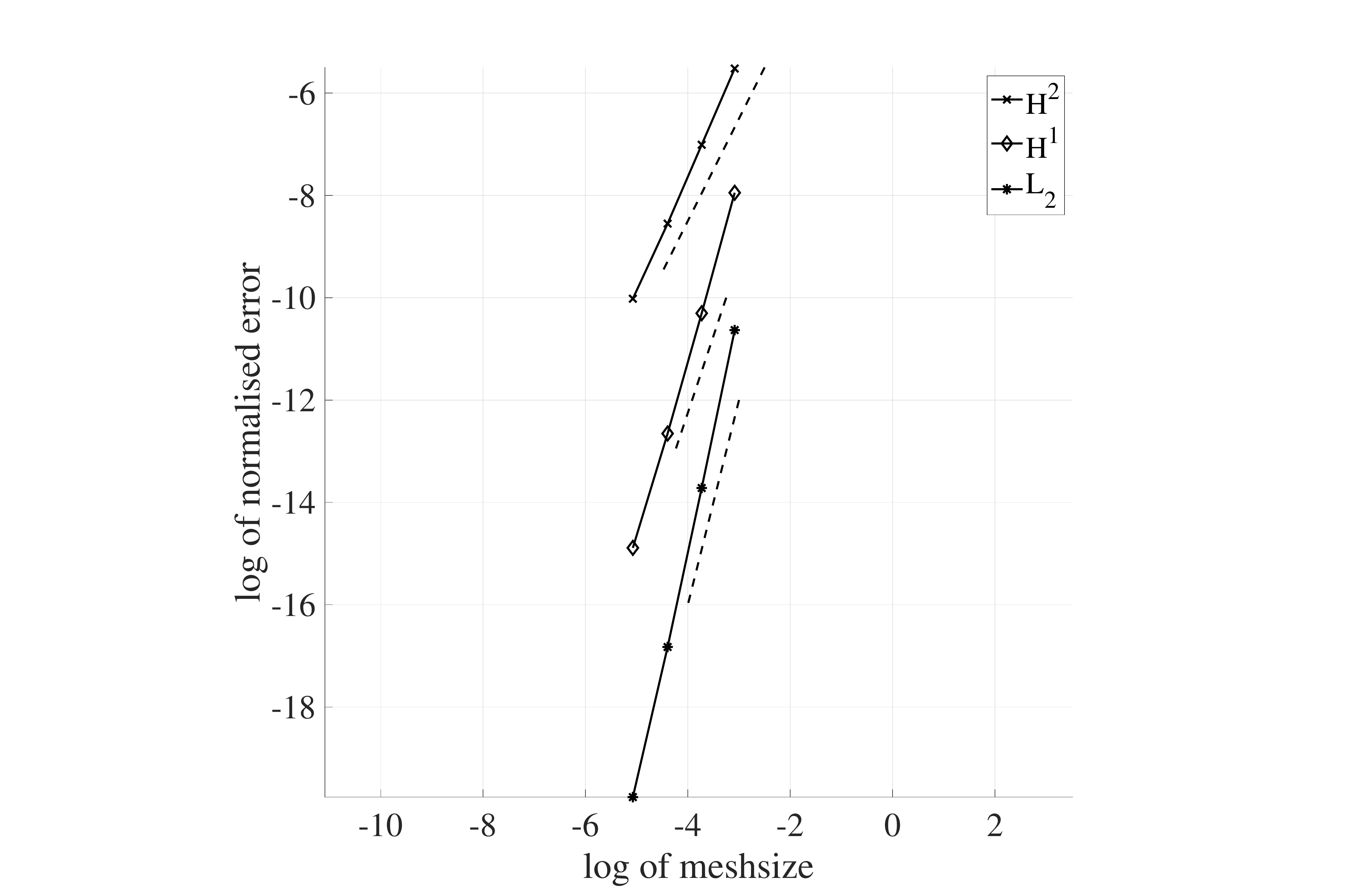}
	\end{center}
	\caption{Convergence for the biharmonic problem. Dashed lines have inclination 2:1, 3:1, and 4:1 from top.}
	\label{fig:convbiharmonic}
\end{figure}

\begin{figure}[ht]
	\begin{center}
		\includegraphics[scale=0.20]{./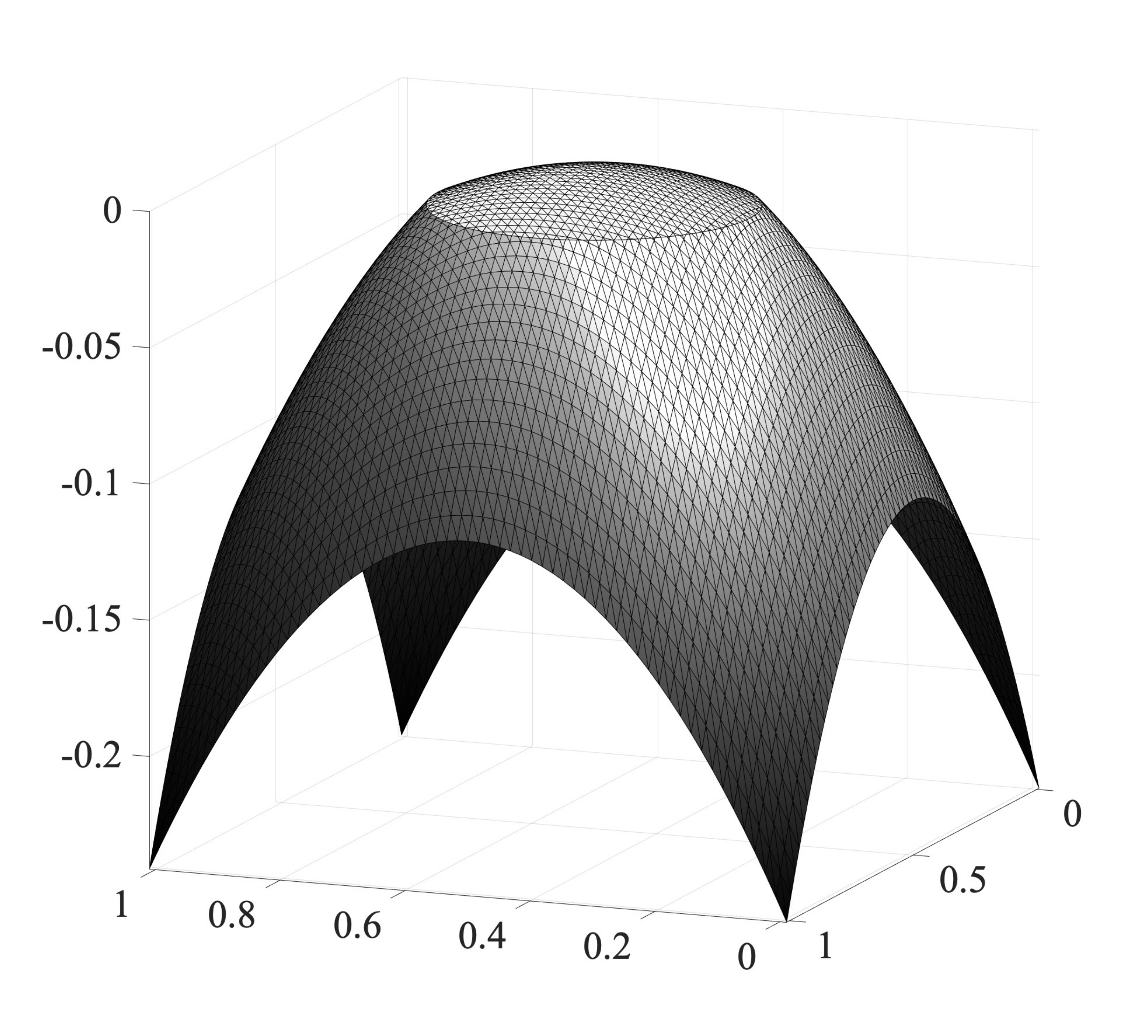}
	\end{center}
	\caption{Elevation of the computed interface solution on a particular mesh.}
\label{fig:interface}
\end{figure}

\begin{figure}[ht]
	\begin{center}
		\includegraphics[scale=0.30]{./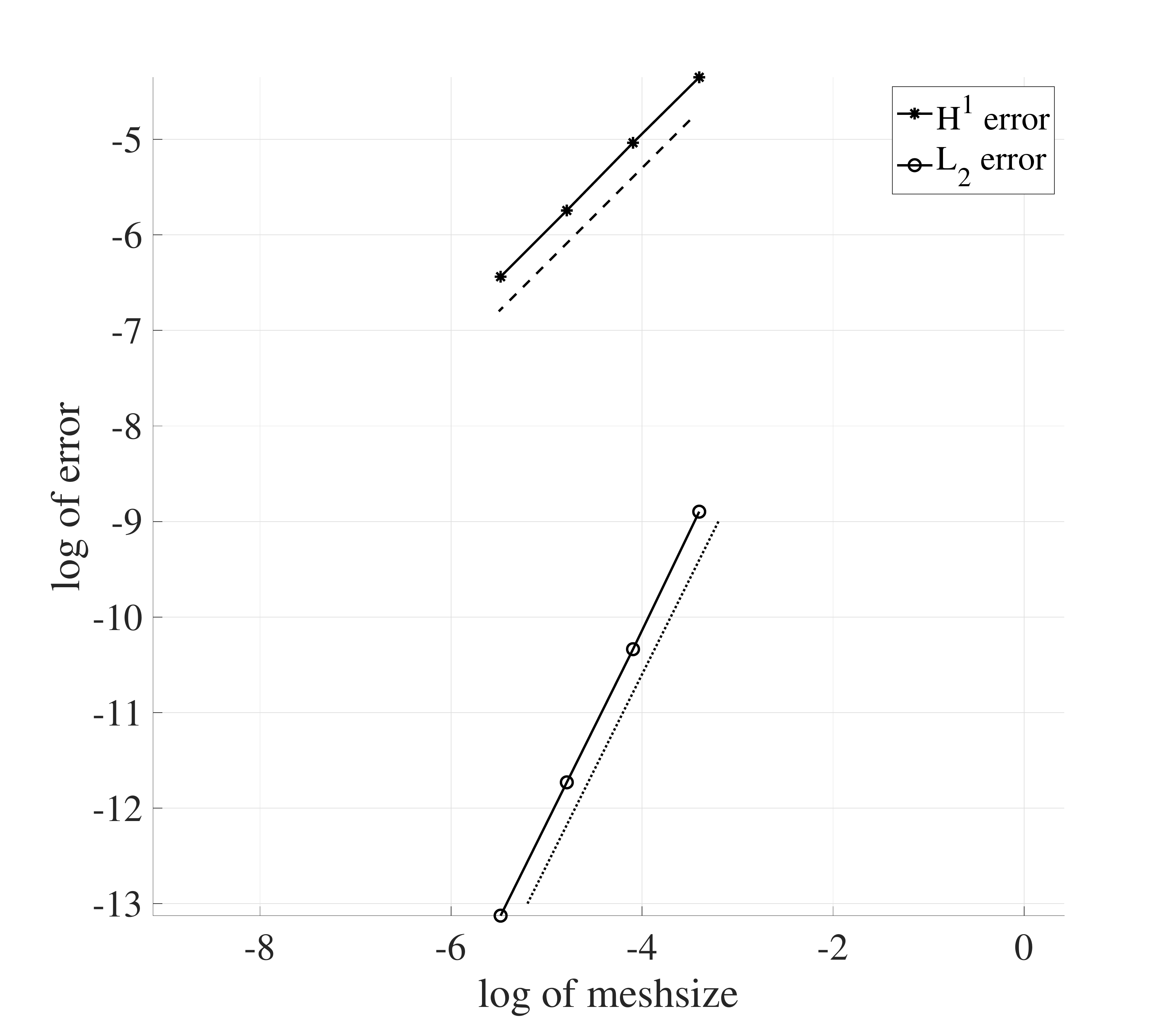}
	\end{center}
	\caption{Convergence for the interface problem. Dashed line has inclination $1:1$ and dotted line has inclination 2:1.}
	\label{fig:convinterface}
\end{figure}

\begin{figure}[ht]
	\begin{center}
		\includegraphics[scale=0.30]{./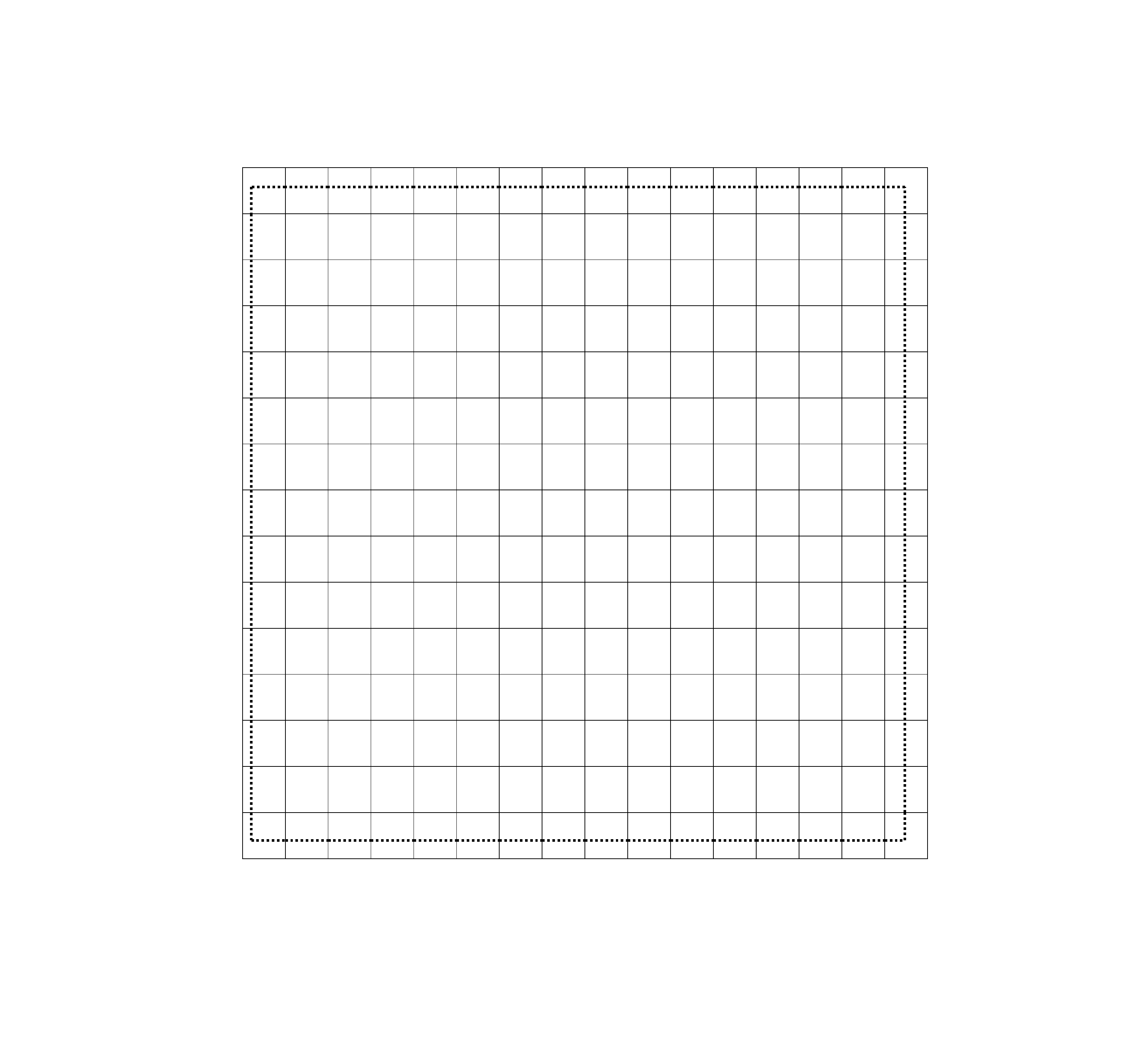}
	\end{center}
	\caption{Mesh used for the $C^2$ approximation. The boundary of the domain is dotted.}
	\label{fig:mesh}
\end{figure}
\begin{figure}[ht]
	\begin{center}
		\includegraphics[scale=0.20]{./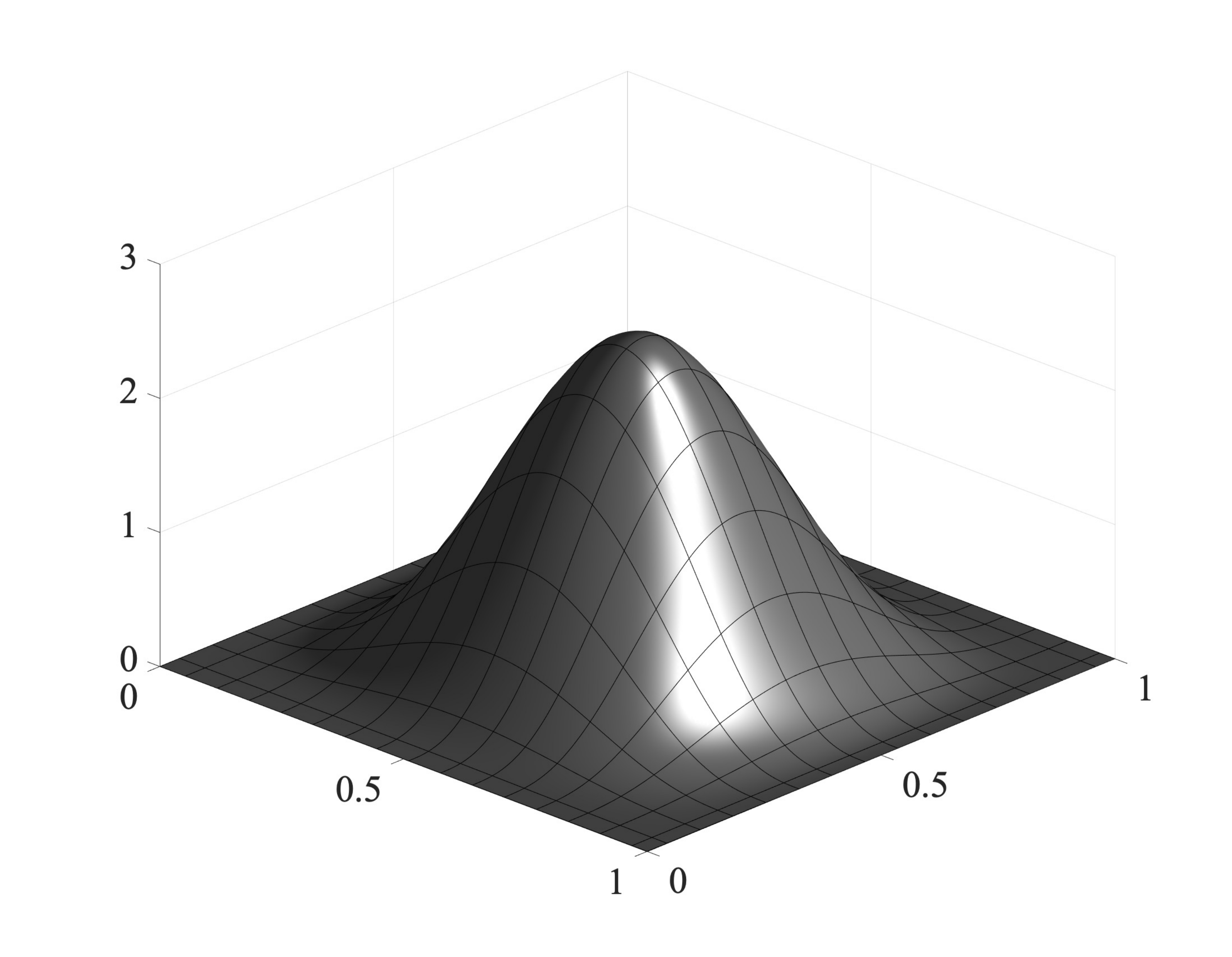}
		\includegraphics[scale=0.20]{./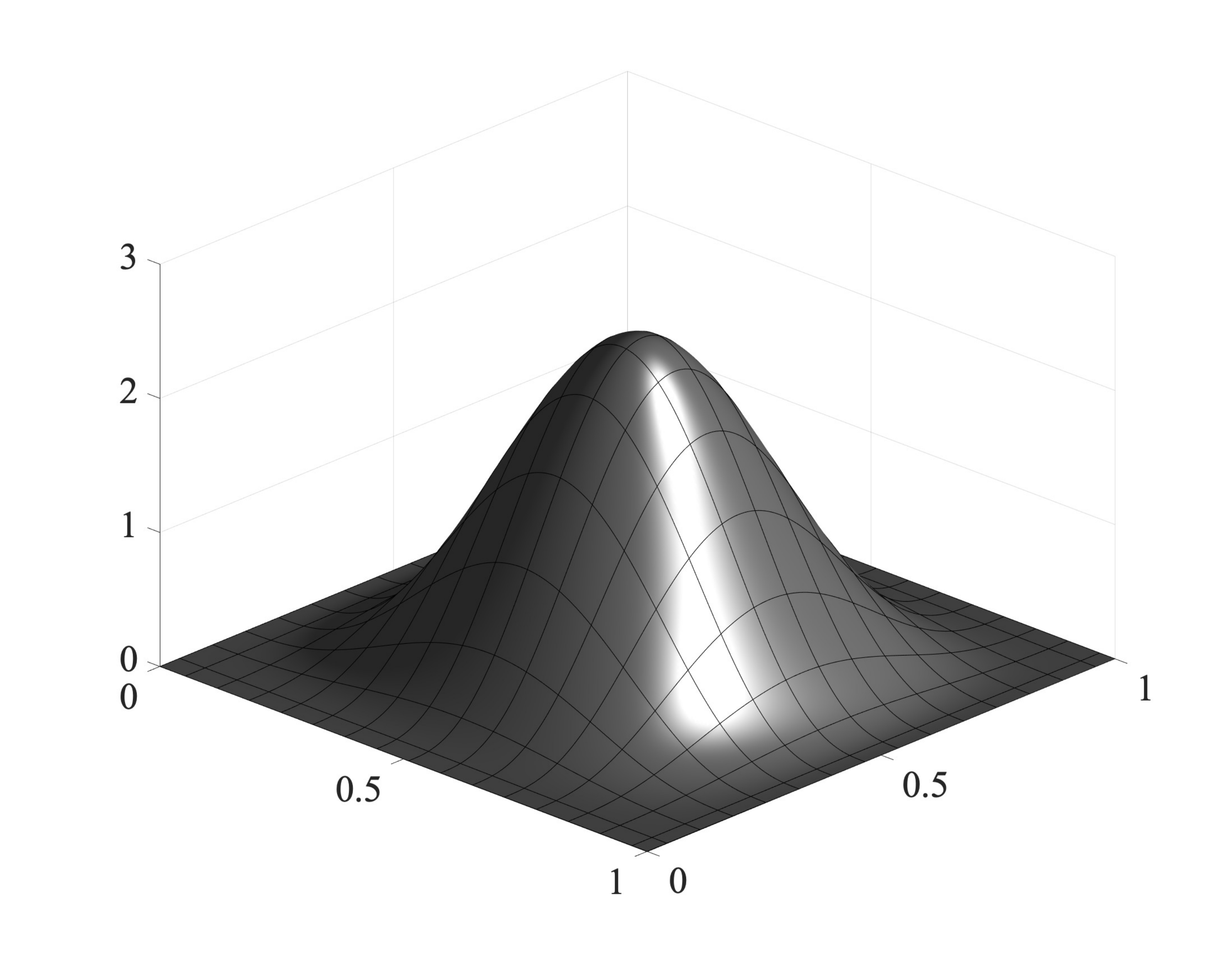}
		\newline\includegraphics[scale=0.20]{./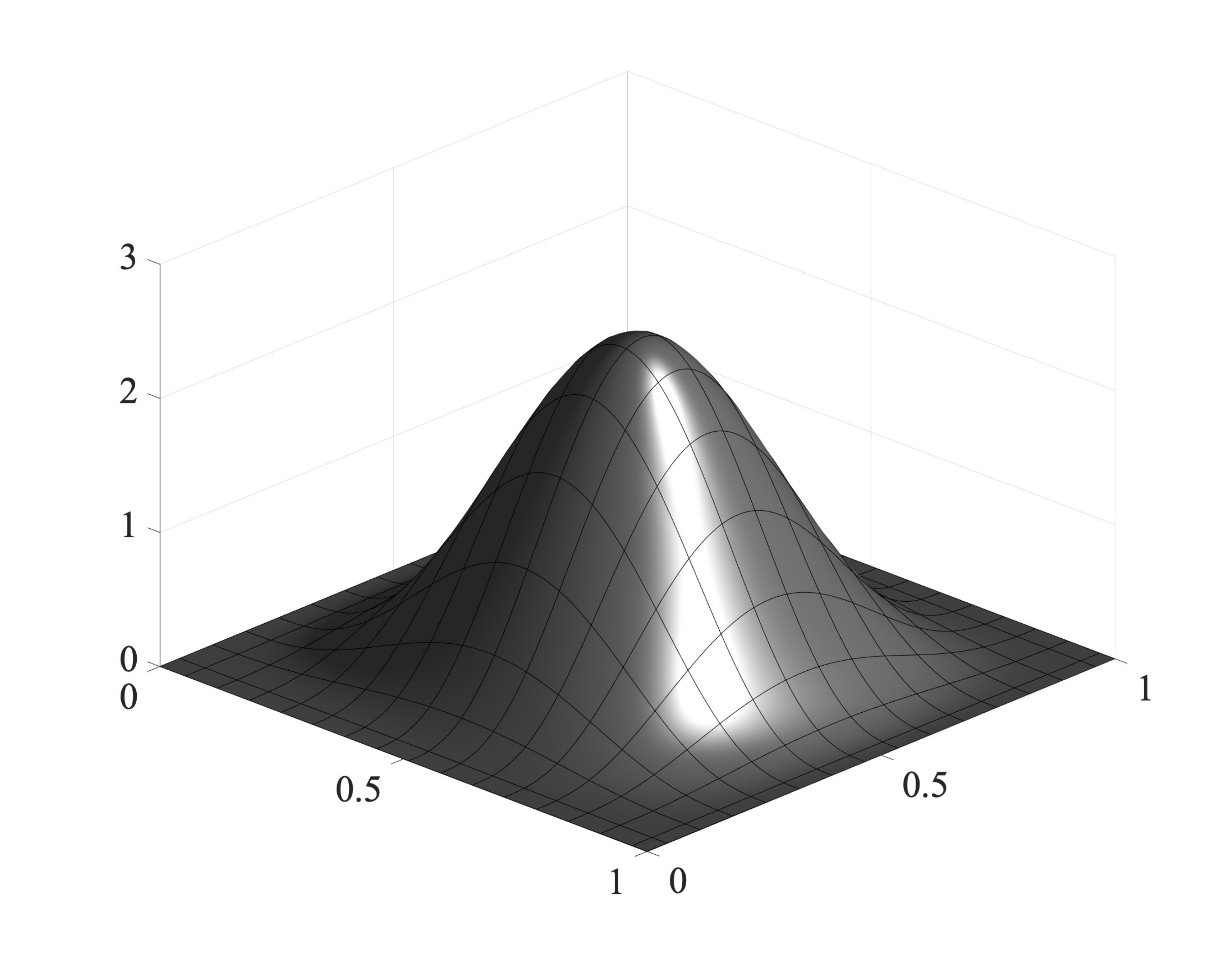}
	\end{center}
	\caption{Elevation of computed solutions for the Poisson, biharmonic, and triharmonic problems.}
	\label{fig:3elev}
\end{figure}
\begin{figure}[ht]
	\begin{center}
		\includegraphics[scale=0.30]{./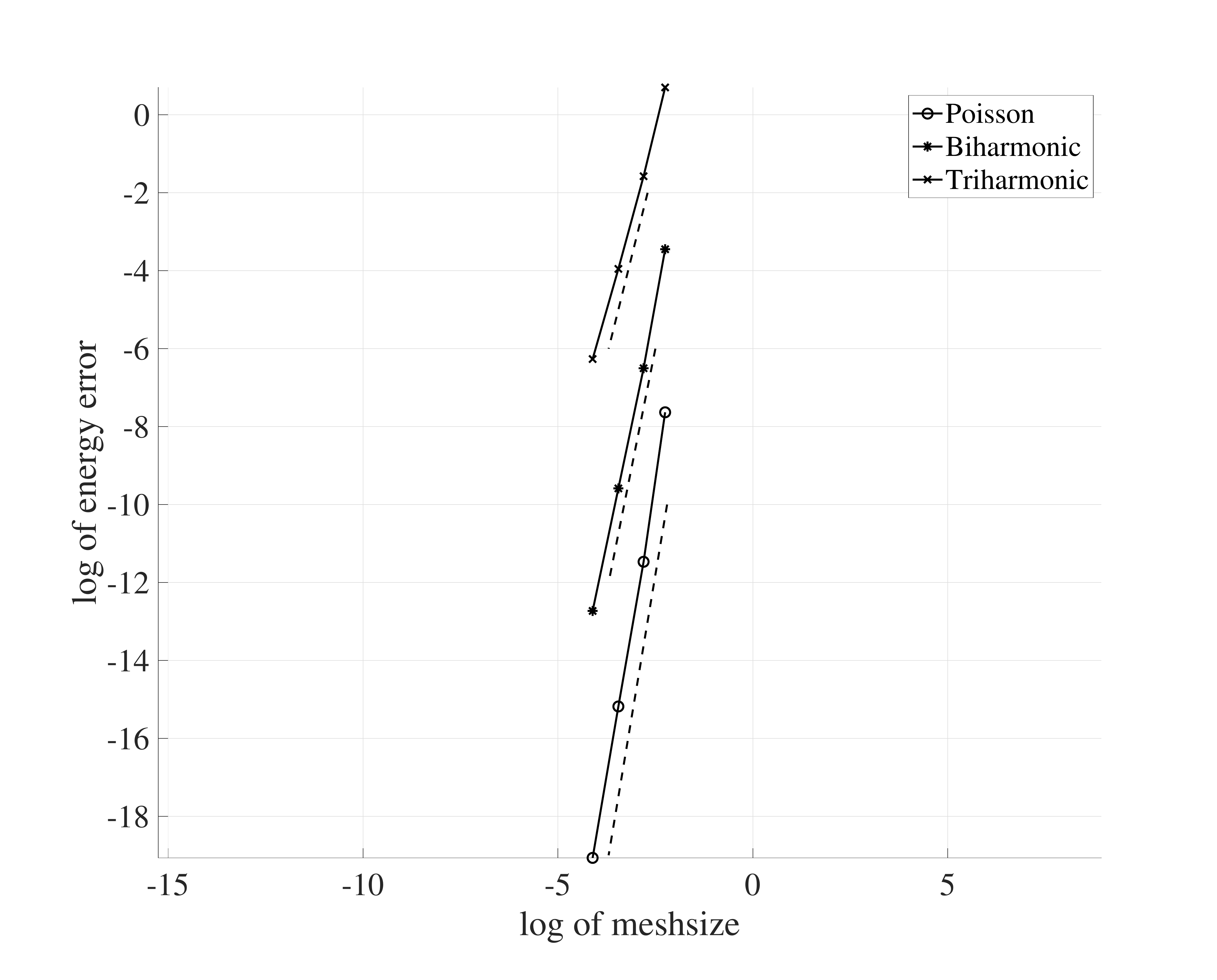}
	\end{center}
	\caption{Energy convergence for the  the Poisson, biharmonic, and triharmonic problems. Dashed lines have inclination 4:1, 5:1, and 6:1 from top.}
	\label{fig:convPBT}
\end{figure}

\end{document}